\documentclass[11pt,leqno]{amsart}
\usepackage[margin=20truemm, includefoot]{geometry}
\usepackage[utf8]{inputenc}
\usepackage{amsmath,amssymb,mathrsfs,stmaryrd,amsaddr}
\usepackage{bm}
\usepackage{enumerate}
\usepackage{amsthm}
\usepackage{hyperref}
\usepackage{comment}
\usepackage[all]{xy}
\usepackage{ascmac}
\usepackage{fancybox}
\usepackage{graphicx}
\usepackage{tikz}
\usepackage{amsfonts}
\usepackage[bbgreekl]{mathbbol}
\usepackage{physics}
\usepackage{datetime}
\usepackage{shuffle}
\DeclareSymbolFontAlphabet{\mathbb}{AMSb}
\DeclareSymbolFontAlphabet{\mathbbl}{bbold}
\DeclareMathSymbol{\bbepsilon}{\mathord}{bbold}{"0F}
\xyoption{poly}
\newtheoremstyle{mystyle}
    {}
    {}
    {\normalfont}
    {}
    {\bfseries}
    {}
    { }
    {}
\DeclareMathOperator{\Ker}{Ker}
\DeclareMathOperator{\End}{End}

\DeclareMathOperator{\polylog}{Li}
\DeclareMathOperator{\depth}{dp}

\newcommand{\duarrbinom}[2]{\left\lbrace\mqty{#1\\#2}\right\rbrace}

\newtheorem{mainthm}{Theorem}

\newtheorem{defi}{Definition}[section]
\newtheorem{prop}[defi]{Proposition}

\newtheorem{cor}[defi]{Corollary}
\newtheorem{theo}[defi]{Theorem}

\newtheorem{lem}[defi]{Lemma}

\newtheorem{notation}[defi]{Notation}

\theoremstyle{remark}
\newtheorem{exa}[defi]{Example}
\newtheorem{rem}[defi]{Remark}
\newtheorem*{rem*}{Remark}
\newtheorem*{exa*}{Example}

\makeatletter

\@addtoreset{equation}{section}
\makeatother
\def\N{{\mathbb N}}
\def\C{{\mathbb C}}
\def\Z{{\mathbb Z}}

\def\Q{{\mathbb Q}}

\title[Non-positive MPL and combinatorics of Magnus polynomials]{\bf Multiple polylogarithms at non-positive indices and combinatorics of Magnus polynomials}
\author{Kohei Kitamura}
\address[]{Department of Mathematics, The University of Osaka, Toyonaka, Osaka 560‐0043, Japan.}
\email{u347758b@ecs.osaka-u.ac.jp; kitamurakmath@gmail.com}
\date{}
\begin{document}
\begin{abstract}
    In this paper we investigate multiple polylogarithms with non-positive multi-indices (non-positive MPLs) from a combinatorial and algebraic viewpoint. By introducing a correspondence between non-positive multiple polylogarithms and Magnus polynomials in a free associative algebra, we obtain an explicit Magnus-type representation of products of mono-indexed non-positive MPLs. The main identity (Theorem \ref{main4}) expresses such a product as a single non-positive MPL indexed by a Magnus polynomial, which may be regarded as a Möbius inversion of the expansion formula due to Duchamp–Hoang Ngoc Minh–Ngo. Moreover, we study the effects of permuted indices and show that certain differences of Magnus polynomials belong to the kernel of the linear map $\polylog^-_\bullet$, leading to new functional equations among non-positive MPLs of the same weight and depth. These results clarify the combinatorial structure underlying non-positive MPLs and reveal a close connection with the Magnus expansion in non-commutative algebra.
\end{abstract}
\maketitle
\tableofcontents
\section{Introduction}
    The \textit{multiple polylogarithm function} (hereafter \textit{MPL}) of depth $r$ is defined on the open unit disc $D=\{z\in\mathbb{C}\mid|z|<1\}$ for any \textit{(multi-)index} $\mathbf{s}=(s_1,\ldots,s_r)\in\mathbb{Z}_{>0}^r$ by:
    \begin{equation}\label{def.polylog}
        \polylog_{\mathbf{s}}(z):=\sum_{n_1>\cdots>n_r>0}\frac{z^{n_1}}{n_1^{s_1}\cdots n_r^{s_r}}.
    \end{equation}
    It is well-known that this function admits the following iterated integral representation as an iterated integral on the domain $D \subset \mathbb{C}$ (see \cite{Wa02}). Let $\omega_0:=dt/t$ and $\omega_1:=dt/(1-t)$ be differential 1-forms. Then we have:
    \begin{equation}\label{eq.ite.int}
        \polylog_{\mathbf{s}}(z)=\int_0^z\omega_0^{s_1-1}\omega_1\cdots\omega_0^{s_r-1}\omega_1.
    \end{equation}
    From this, it follows that the function can be analytically continued to the simply connected domain $\Omega:=\mathbb{C}\setminus((-\infty, 0]\cup [1,+\infty))\subset\mathbb{C}$. These equations (\ref{def.polylog}), (\ref{eq.ite.int}) form a basis for various branches of research from arithmetic, combinatorial or algebro-geometrical viewpoints (cf. \cite{Go98}, \cite{F04}, \cite{IKZ06} etc.).\par
    These MPLs $\polylog_\mathbf{s}$ have been studied also for non-positive indices by many authors (cf. \cite{D17a}, \cite{D17b}, \cite{EMS17}, \cite{GZ17}, \cite{Bu23}). In this paper, we write
    \begin{equation}
        \polylog^-_{\mathbf{s}}(z):=\polylog_{(-s_1,\ldots,-s_r)}(z)=\sum_{0<n_r<\cdots<n_1}n_1^{s_1}\cdots n_r^{s_r}z^{n_1}\quad(z\in D),
    \end{equation}
    and focus on their combinatorial properties.
    Each $\polylog^-_\mathbf{s}$ is a $\Q$-rational function of $z$ vanishing at $z=0$ and having a pole at $z=1$. Indeed, if $\theta_0:=z\partial_z$ and $\lambda:=z/(1-z)$, then $\polylog^-_{\mathbf{s}}$ has another expression (cf: \cite[(8)(9)(10)]{D17a}):
    \begin{equation}\label{eq.another ex. of n.p.MPL}
        \polylog^-_{\mathbf{s}}(z)=\theta_0^{s_1}(\lambda\cdot\theta_0^{s_2}(\cdots\lambda\cdot\theta^{s_r}(\lambda))\cdots)\in\frac{z}{1-z}\cdot\Q\qty[z,\frac{1}{1-z}],
    \end{equation}
    where $\lambda\cdot-$ in this equation means multiplication by $\lambda$ in $\mathcal{O}(\Omega)$.\par
    To study combinatorial behaviors of $\polylog^-_\mathbf{s}$ (for short non-positive MPL), we associate them to linear sums of indices as follows (cf. \cite[p.171]{D17a}). Let $X:=\{x_0,x_1\},\,Y=\{y_0,y_1,\ldots\}$ and $\C\langle X\rangle,\,\C\langle Y\rangle$ be the free associative $\C$-algebra generated by $X,Y$ respectively. Then we define the $\C$-linear map:
    \begin{equation}
        \polylog^-_\bullet:\C\langle Y\rangle\longrightarrow\C[z,(1-z)^{-1}]
    \end{equation}
    by $w=y_{s_1}\cdots y_{s_r}\longmapsto\polylog^-_w=\theta_0^{s_1}(\lambda\cdot\theta_0^{s_2}(\cdots\lambda\cdot\theta^{s_r}(\lambda))\cdots).$\par
    The purpose of this paper is to present a new combinatorial perspective of $\polylog^-_\bullet$ in view of their connection with \textit{Magnus polynomials}. Define the Lie bracket $[u,v]:=uv-vu$ for $u,v\in \C\langle x_0,x_1\rangle$, and set
    \begin{equation}
        x_1^{(0)}:=x_1,\quad x_1^{(n)}:=[x_0,x_1^{(n-1)}]\quad(n>0).
    \end{equation}
    Write $\N$ (resp. $\N^\infty$) for the set of non-negative integers (resp. the disjoint union of the $\N^r$ for all $r\geq0$). The \textit{Magnus polynomial} for a multi-index $\mathbf{k} = (k_1,\ldots,k_{r-1};k_r)\in\N^\infty\times\N$ is defined by
    \begin{equation}
        M^{(\mathbf{k})}:=x_1^{(k_1)}\cdots x_1^{(k_{r-1})}x_0^{k_r},
    \end{equation}
    as introduced in \cite{Ma37}. By considering the $\C$-linear isomorphism 
    \begin{equation}
        \pi:\C\langle X\rangle x_1\overset{\sim}{\longrightarrow}\C\langle Y\rangle
    \end{equation}
    determined by $x_0^{s_1}x_1\cdots x_0^{s_r}x_1\longmapsto y_{s_1}\cdots y_{s_r}$, we get:
    \begin{mainthm}[Theorem \ref{Magnus poly and n-fold prod}, Corollary \ref{n-fold product}]\label{main4}
        Let $\mathbf{k}=(k_1,\ldots k_{r-1};k_r)\in\N^\infty\times\N$. Then we have
        \begin{equation}\label{eq.main4}
            \polylog_{k_1}^-\cdots\polylog_{k_r}^-=\polylog^-_{\pi(M^{(\mathbf{k})}x_1)}
        \end{equation}
    \end{mainthm}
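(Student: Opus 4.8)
The plan is to realize both sides of (\ref{eq.main4}) as the result of applying a single operator to the constant function $1$. Since $\C\langle X\rangle$ is free on $X=\{x_0,x_1\}$, there is a unique $\C$-algebra homomorphism
\[
\Phi\colon\C\langle X\rangle\longrightarrow\bigl(\End_\C(R),\circ\bigr),\qquad R:=\C[z,(1-z)^{-1}],
\]
determined by $\Phi(x_0)=\theta_0$ and $\Phi(x_1)=(\lambda\cdot)$, the operator of multiplication by $\lambda=z/(1-z)$; this is legitimate because $R$ is stable under $\theta_0=z\partial_z$ and under multiplication by $\lambda$. Evaluating $\Phi$ on a monomial $x_0^{s_1}x_1\cdots x_0^{s_r}x_1$ and applying the result to $1$ reproduces exactly the nested expression in (\ref{eq.another ex. of n.p.MPL}), so that $\Phi(x_0^{s_1}x_1\cdots x_0^{s_r}x_1)(1)=\polylog^-_{\pi(x_0^{s_1}x_1\cdots x_0^{s_r}x_1)}$. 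As these monomials form a basis of $\C\langle X\rangle x_1$ and both $w\mapsto\Phi(w)(1)$ and $w\mapsto\polylog^-_{\pi(w)}$ are $\C$-linear, we get $\polylog^-_{\pi(w)}=\Phi(w)(1)$ for every $w\in\C\langle X\rangle x_1$.

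The key point is that $\Phi$ sends the Lie building blocks $x_1^{(n)}$ to multiplication operators. For $g\in R$ the Leibniz rule $\theta_0(gf)=\theta_0(g)f+g\theta_0(f)$ gives the operator identity $[\theta_0,(g\cdot)]=(\theta_0(g)\cdot)$ on $R$. Since an algebra homomorphism preserves commutators, $\Phi([x_0,u])=[\theta_0,\Phi(u)]$; hence, by induction on $n$ from the base case $\Phi(x_1^{(0)})=\Phi(x_1)=(\lambda\cdot)=(\polylog^-_0\cdot)$,
\[
\Phi\bigl(x_1^{(n)}\bigr)=\bigl(\theta_0^n(\lambda)\cdot\bigr)=\bigl(\polylog^-_n\cdot\bigr)\qquad(n\ge0).
\]
In words: after applying $\Phi$, the iterated brackets $\operatorname{ad}_{x_0}$ defining the Magnus blocks collapse into plain multiplication by the mono-indexed non-positive MPLs.

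Granting this, the theorem is a one-line computation. First note $M^{(\mathbf{k})}x_1\in\C\langle X\rangle x_1$, since after expanding the Lie polynomial $x_1^{(k_1)}\cdots x_1^{(k_{r-1})}$ every resulting word is followed by $x_0^{k_r}x_1$ and hence ends in $x_1$. Then, for $\mathbf{k}=(k_1,\ldots,k_{r-1};k_r)$,
\[
\Phi\bigl(M^{(\mathbf{k})}x_1\bigr)=(\polylog^-_{k_1}\cdot)\circ\cdots\circ(\polylog^-_{k_{r-1}}\cdot)\circ\theta_0^{k_r}\circ(\lambda\cdot),
\]
and applying this to $1$, from right to left, produces $(\lambda\cdot)(1)=\lambda$, then $\theta_0^{k_r}(\lambda)=\polylog^-_{k_r}$, then successive multiplication by $\polylog^-_{k_{r-1}},\dots,\polylog^-_{k_1}$; thus $\Phi(M^{(\mathbf{k})}x_1)(1)=\polylog^-_{k_1}\cdots\polylog^-_{k_r}$. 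Combined with $\polylog^-_{\pi(M^{(\mathbf{k})}x_1)}=\Phi(M^{(\mathbf{k})}x_1)(1)$ from the first paragraph, this is (\ref{eq.main4}). Equivalently one may induct on $r$, the inductive step being $\polylog^-_n\cdot\polylog^-_{\pi(w)}=\polylog^-_{\pi(x_1^{(n)}w)}$ for $w\in\C\langle X\rangle x_1$, which is again just the commutator identity repackaged.

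I expect no serious obstacle: the load-bearing step, which I would isolate as a separate lemma, is $\Phi(x_1^{(n)})=(\polylog^-_n\cdot)$, and its proof is the short Leibniz computation above. The only things needing care are bookkeeping — confirming that $R$ is closed under $\theta_0$ and $(\lambda\cdot)$, that $M^{(\mathbf{k})}x_1$ really lands in $\C\langle X\rangle x_1$, and that evaluation at $1$ matches the monomial definition of $\polylog^-_\bullet\circ\pi$. The genuine content is the observation that $\operatorname{ad}_{\theta_0}$ preserves the space of multiplication operators, which turns the Lie-theoretic definition of Magnus polynomials into elementary function multiplication.
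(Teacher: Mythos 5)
Your proposal is correct, and it reaches the identity by a genuinely different route from the paper. The paper first proves Theorem \ref{product between literature and word} by induction on $m$ (via the Leibniz rule $\theta_0(fg)=(\theta_0 f)g+f(\theta_0 g)$), iterates it to obtain the explicit multi-binomial product formula of Corollary \ref{n-fold product}, and then proves Theorem \ref{Magnus poly and n-fold prod} by matching those coefficients term by term against the word-expansion of $M^{(\mathbf{k})}$ in Corollary \ref{expantion of Magnus poly}. You instead assemble $\theta_0$ and $(\lambda\cdot)$ into a representation $\Phi$ of the free algebra on $R=\C[z,(1-z)^{-1}]$, check $\Phi(w)(1)=\polylog^-_{\pi(w)}$ on the monomial basis $x_0^{s_1}x_1\cdots x_0^{s_r}x_1$ of $\C\langle X\rangle x_1$ (which is exactly the nested expression (\ref{eq.another ex. of n.p.MPL})), and use the operator identity $[\theta_0,(g\cdot)]=(\theta_0(g)\cdot)$ --- the same Leibniz rule, but applied once at the level of operators --- to collapse each $x_1^{(n)}$ to multiplication by $\polylog^-_n$. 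This makes the theorem a one-line evaluation and, as a bonus, explains structurally why the coefficients in Corollary \ref{expantion of Magnus poly} and Corollary \ref{n-fold product} coincide, a coincidence the paper only observes after the fact and revisits in Section 5 via the array-binomial duality of \cite{Na23}. What the paper's longer route buys is the explicit intermediate identities themselves (Theorem \ref{product between literature and word}, Corollary \ref{n-fold product}, e.g.\ the $r=2$ formula (\ref{eq.main4.r=2})), which have independent interest and are quoted elsewhere; your route is shorter and essentially computation-free but does not produce them. All the bookkeeping points you flag (stability of $R$, $M^{(\mathbf{k})}x_1\in\C\langle X\rangle x_1$, linearity of both sides) do check out.
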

    For example, when $r=2$ (Corollary \ref{non-positive polylog shuffle}),
    \begin{equation}\label{eq.main4.r=2}
        \polylog_{k_1}^-\polylog_{k_2}^-=\sum_{i=0}^{k_1}(-1)^i\binom{k_1}{i}\polylog_{(k_1-i,k_2+i)}^-=\sum_{i=0}^{k_2}(-1)^i\binom{k_2}{i}\polylog_{(k_2-i,k_1+i)}^-.
    \end{equation}
    We derive Theorem \ref{main4} as a kind of M\"obius inversion of the following known formula.
    \begin{mainthm}[Proposition \ref{polylog and array binomial} cf. \cite{D17a} Theorem 2 p.11]\label{main1}
        Let $r>1$ and $w=y_{s_1}\cdots y_{s_r}\in Y^\ast$. Then we have
        \begin{align*}
            \polylog_w^-=&\sum_{k_1=0}^{s_1}\sum_{k_2=0}^{s_1+s_2-k_1}\cdots\sum_{k_{r-1}=0}^{(s_1+\cdots+s_{r-1})-(k_1+\cdots+k_{r-2})}\binom{s_1}{k_1}\binom{s_1+s_2-k_1}{k_2}\cdots\\
            &\cdot\binom{s_1+\cdots+s_{r-1}-k_1-\cdots-k_{r-2}}{k_{r-1}}\cdot\polylog^-_{{k_1}}\polylog^-_{{k_2}}\cdots\polylog^-_{{k_{r-1}}}\polylog^-_{{s_1+\cdots+s_r-k_1-\cdots-k_{r-1}}}.
        \end{align*}
    \end{mainthm}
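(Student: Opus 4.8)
The plan is to establish the formula by induction on the depth $r$, relying on only two elementary facts about the operator $\theta_0=z\partial_z$ acting on $\C[z,(1-z)^{-1}]$: that it is a derivation, so that the Leibniz rule
\[
  \theta_0^{\,n}(fg)=\sum_{k=0}^{n}\binom{n}{k}\,\theta_0^{\,k}(f)\,\theta_0^{\,n-k}(g)
\]
holds, and that it acts on non-positive MPLs by incrementing the leading index. The second fact I would isolate as a \emph{shift lemma}: for every word $u\in Y^\ast$ (possibly empty), every $a\geq 0$ and every $m\geq 0$,
\[
  \theta_0^{\,m}\bigl(\polylog^-_{y_a u}\bigr)=\polylog^-_{y_{a+m}u},
\]
which follows at once from the recursive description $\polylog^-_{y_a u}=\theta_0^{\,a}\bigl(\lambda\cdot\polylog^-_{u}\bigr)$: applying $\theta_0^{\,m}$ simply replaces $\theta_0^{\,a}$ by $\theta_0^{\,a+m}$. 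In particular $\theta_0^{\,m}(\lambda)=\polylog^-_{m}$, since $\lambda=\polylog^-_{y_0}$.

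For the inductive step, I would write $w=y_{s_1}w'$ with $w'=y_{s_2}y_{s_3}\cdots y_{s_r}$, so that $\polylog^-_w=\theta_0^{\,s_1}\bigl(\lambda\cdot\polylog^-_{w'}\bigr)$, and then expand by the Leibniz rule followed by the shift lemma applied to $w'$:
\begin{align*}
  \polylog^-_w
  &=\sum_{k_1=0}^{s_1}\binom{s_1}{k_1}\,\theta_0^{\,k_1}(\lambda)\cdot\theta_0^{\,s_1-k_1}\bigl(\polylog^-_{w'}\bigr)\\
  &=\sum_{k_1=0}^{s_1}\binom{s_1}{k_1}\,\polylog^-_{k_1}\cdot\polylog^-_{y_{s_1+s_2-k_1}y_{s_3}\cdots y_{s_r}}.
\end{align*}
The trailing factor is a non-positive MPL of depth $r-1$ attached to the index vector $(s_1+s_2-k_1,\,s_3,\ldots,s_r)$, of total weight $s_1+\cdots+s_r-k_1$. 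Applying the induction hypothesis to it and pulling the common factor $\polylog^-_{k_1}$ outside the resulting sum reproduces exactly the nested sum in the statement: the ``running leading index'' at stage $j$ is $s_1+\cdots+s_j-k_1-\cdots-k_{j-1}$, which is simultaneously the upper limit of the $k_j$-summation and the top entry of the matching binomial coefficient, while the last mono-indexed factor comes out as $\polylog^-_{s_1+\cdots+s_r-k_1-\cdots-k_{r-1}}$. The base case $r=1$ is the tautology $\polylog^-_{y_{s_1}}=\polylog^-_{s_1}$; equivalently, the case $r=2$ is precisely the single Leibniz step displayed above.

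There is no analytic content to the argument — everything happens inside the polynomial ring $\C[z,(1-z)^{-1}]$, on which $\theta_0$ is an honest derivation — so the only point requiring genuine care is the combinatorial bookkeeping of the induction: one must phrase the hypothesis so that the telescoping leading index $s_1+\cdots+s_j-k_1-\cdots-k_{j-1}$ threads correctly through the successive binomials, and verify that the final mono-index $s_1+\cdots+s_r-k_1-\cdots-k_{r-1}$ is always $\geq 0$ (it is, since the constraint $k_j\leq s_1+\cdots+s_j-k_1-\cdots-k_{j-1}$ forces $k_1+\cdots+k_{r-1}\leq s_1+\cdots+s_{r-1}$). I would also remark that the same identity can be read off directly from the defining power series $\polylog^-_w(z)=\sum_{0<n_r<\cdots<n_1}n_1^{s_1}\cdots n_r^{s_r}z^{n_1}$ via the change of summation variables $m_i=n_i-n_{i+1}$ for $i<r$ and $m_r=n_r$, which turns $\polylog^-_w$ into $\sum_{m_1,\ldots,m_r>0}\prod_{j=1}^{r}\bigl(\sum_{i\geq j}m_i\bigr)^{s_j}\prod_{i=1}^{r}z^{m_i}$; expanding each factor $\bigl(m_j+\sum_{i>j}m_i\bigr)^{s_j}$ by the binomial theorem one variable at a time yields the formula, and this is just the combinatorial shadow of the Leibniz computation above. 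This reproves, in the present notation, Theorem 2 of \cite{D17a}.
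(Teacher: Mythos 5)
Your proof is correct and follows essentially the same route as the paper: a single application of the generalized Leibniz rule for $\theta_0^{s_1}$ to $\lambda\cdot\polylog^-_{y_{s_2}\cdots y_{s_r}}$, combined with the index-shift identity $\theta_0^{m}(\polylog^-_{y_a u})=\polylog^-_{y_{a+m}u}$, followed by induction on $r$. Your explicit isolation of the shift lemma and the closing power-series remark (substituting $m_i=n_i-n_{i+1}$) are welcome clarifications, but the underlying argument is the one the paper gives.
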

    (A small typographical error in \cite{D17a} will be corrected in Proposition \ref{polylog and array binomial} of this paper.)\par
    In particular, if we define $\sigma(\mathbf{k}):=(k_{\sigma(1)},\ldots,k_{\sigma(r-1)};k_{\sigma(r)})$ for any $\mathbf{k} = (k_1, \ldots, k_{r-1}; k_r) \in \N^\infty \times \N$ and any permutation $\sigma \in \mathfrak{S}_r$, then we obtain the following result:
    \begin{mainthm}[Theorem \ref{Magnus and symmetric group}]\label{main5}
        For any $r>0$ and any $\sigma\in\mathfrak{S}_r$, we have
        \begin{equation}
            \pi\left((M^{(\mathbf{k})}-M^{(\sigma(\mathbf{k}))})x_1\right)\in\Ker\polylog^-_\bullet.
        \end{equation}
    \end{mainthm}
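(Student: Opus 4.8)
The plan is to obtain Theorem~\ref{main5} as an immediate corollary of Theorem~\ref{main4} together with the commutativity of the target ring $\C[z,(1-z)^{-1}]$. First I would check that $M^{(\mathbf{k})}x_1$ and $M^{(\sigma(\mathbf{k}))}x_1$ both lie in the domain $\C\langle X\rangle x_1$ of $\pi$: since each $x_1^{(n)}=[x_0,x_1^{(n-1)}]$ expands as a $\Z$-linear combination of words of the form $x_0^ax_1x_0^b$ with $a+b=n$, every Magnus polynomial $M^{(\mathbf{k})}=x_1^{(k_1)}\cdots x_1^{(k_{r-1})}x_0^{k_r}$ is a $\C$-linear combination of words in $x_0,x_1$, and hence $M^{(\mathbf{k})}x_1$ is a $\C$-linear combination of words ending in $x_1$, i.e.\ an element of $\C\langle X\rangle x_1$. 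Because $\pi$ and $\polylog^-_\bullet$ are $\C$-linear, it then suffices to establish the equality of rational functions
\[
    \polylog^-_{\pi(M^{(\mathbf{k})}x_1)}=\polylog^-_{\pi(M^{(\sigma(\mathbf{k}))}x_1)}.
\]

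Next I would invoke Theorem~\ref{main4} twice. Applied to $\mathbf{k}=(k_1,\ldots,k_{r-1};k_r)$ it gives $\polylog^-_{\pi(M^{(\mathbf{k})}x_1)}=\polylog_{k_1}^-\polylog_{k_2}^-\cdots\polylog_{k_r}^-$; and since $\sigma(\mathbf{k})=(k_{\sigma(1)},\ldots,k_{\sigma(r-1)};k_{\sigma(r)})$ is again an element of $\N^\infty\times\N$, applying Theorem~\ref{main4} to it yields $\polylog^-_{\pi(M^{(\sigma(\mathbf{k}))}x_1)}=\polylog_{k_{\sigma(1)}}^-\polylog_{k_{\sigma(2)}}^-\cdots\polylog_{k_{\sigma(r)}}^-$. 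The two right-hand sides are products of the same $r$ rational functions in possibly different order, and since multiplication in $\C[z,(1-z)^{-1}]$ is commutative they coincide. This establishes the displayed identity, and the theorem follows; for $r=1$ the statement is anyway vacuous, since $\mathfrak{S}_1$ is trivial.

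I do not expect any serious obstacle here: all of the analytic and combinatorial content is already packaged into Theorem~\ref{main4}, and Theorem~\ref{main5} merely records the tension between the Magnus side, where $M^{(\mathbf{k})}$ genuinely depends both on the ordering of the $k_i$ and on which entry is singled out as the exponent $k_r$, and the polylogarithm side, where the product $\polylog_{k_1}^-\cdots\polylog_{k_r}^-$ is patently symmetric. The only minor point worth stating carefully is that $\sigma$ is allowed to move the last coordinate, so one must apply Theorem~\ref{main4} to the honestly permuted index $\sigma(\mathbf{k})$ and not merely to a reordering of $(k_1,\ldots,k_{r-1})$. Should one want the resulting functional equations among non-positive MPLs in explicit form, one would expand both $\pi(M^{(\mathbf{k})}x_1)$ and $\pi(M^{(\sigma(\mathbf{k}))}x_1)$ via the array-binomial formula of Theorem~\ref{main1} (equivalently the Duchamp--Hoang Ngoc Minh--Ngo expansion), as already illustrated for $r=2$ by the two expressions in \eqref{eq.main4.r=2}.
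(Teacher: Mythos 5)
Your proposal is correct and is essentially the paper's own argument: the paper likewise deduces the statement immediately from Theorem \ref{Magnus poly and n-fold prod} (Theorem \ref{main4}) applied to both $\mathbf{k}$ and $\sigma(\mathbf{k})$, together with the commutativity of $\C[z,(1-z)^{-1}]$. The extra checks you include (that $M^{(\mathbf{k})}x_1\in\C\langle X\rangle x_1$ and that $\sigma$ may move the last coordinate) are harmless elaborations of the same one-line proof.
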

    This corollary has an application to $\Q$-linear functional equations of non-positive MPLs. For example we obtain the following equation:
    \begin{exa*}[Example \ref{Algorithm examples of Magnus}]
        Computing $(M^{(0,1;2)}-M^{(1,2;0)})x_1$, we obtain
        \begin{equation}
            2\polylog^-_{(0,1,2)}+2\polylog^-_{(1,1,1)}+\polylog^-_{(0,3,0)}=\polylog^-_{(0,0,3)}+\polylog^-_{(1,2,0)}+\polylog^-_{(1,0,2)}+2\polylog^-_{(0,2,1)}.
        \end{equation}
        See Section 6 for more detailed computations.
    \end{exa*}
    However, the kernel $\Ker\polylog^-_\bullet$ is not generated by $\{\pi\left((M^{(\mathbf{k})}-M^{(\sigma(\mathbf{k}))})x_1\right)\}_{\sigma,\mathbf{k}}$. This point is examined in Proposition \ref{Permutation is not equal to Ker} and will be discussed more in \cite{KKN}. 
    \subsubsection*{Outline}
    The contents of this paper are organized as follows: In Sections 2 and 3, we introduce multiple polylogarithms with non-positive multi-indices (hereafter, non-positive MPL) and set up them in connection with non-commutative polynomials. In Section 4, we first show the above Theorem \ref{main1} (Proposition \ref{polylog and array binomial}) by analysing inductive applications of $\theta_0=z\partial_z$ and of multiplication to the function $\lambda=z/(1-z)$ according to (\ref{eq.another ex. of n.p.MPL}). We then show a key formula (Theorem \ref{product between literature and word}) that represent a product of the form $\polylog^-_{y_n}\polylog^-_{y_rw'}$ as an alternating linear sum of the terms $\polylog^-_{y_{n-k}y_{r+k}w'}\ (k=0,\ldots,m)$ with binomial coefficients. Theorem \ref{main4} is then a consequence of iterative applications of the key formula (Corollary \ref{n-fold product}). Section 5 is devoted to interpretation of our main formulas in the framework of \cite{Na23} which provides alternative proofs. Finally in Section 6, we discuss applications to functional equations of non-positive MPLs and the kernel of $\polylog^-_\bullet$.
    \subsubsection*{Acknowledgements}
    This paper is partially based on my master thesis \cite{K25}. I would like to express my deepest gratitude to my advisor, Professor Hiroaki Nakamura, for his warm guidance and constant support throughout the preparation of this paper. This work was supported by JST SPRING, Grant Number JPMJSP2138.
\section{Some preliminaries}
\textit{Throughout this paper, $\N$ denotes the set of non-negative integers.}
    Let $\Omega$ be the simply connected space $\C-((-\infty,0]\cup[1,\infty))$.
    \begin{lem}
        Let $\mathcal{H}(\Omega)$ be the ring of holomorphic functions on $\Omega$. Then, the followings hold.
        \begin{enumerate}[(a)]
            \item 
            Let $\mathcal{M}$ be the ideal of holomorphic functions on $\Omega$ which vanish at 0. We define formally the operators:
            \begin{align*}
                \theta_0:=z\partial_z,\quad\iota_0(f):=\int_0^z\frac{f(t)}{t}dt,\quad\iota_1(f):=\int_0^z\frac{f(t)}{1-t}dt\quad(f\in\mathcal{H}(\Omega)).
            \end{align*}
            Then, these operators act $\C$-linearly on $\mathcal{M}$ (i.e.  $\theta_0,\iota_0,\iota_1\in\End_\C(\mathcal{M})$). In particular, the domain of $\iota_1$ can be extended to $\mathcal{H}(\Omega)$. (Note that these integration paths are included in $\Omega$.)
            \item 
            Let $\lambda:=z/(1-z)$. Then, for any $f\in\mathcal{M},\ \theta_0\iota_1(f)=\lambda f.$ 
        \end{enumerate}
    \end{lem}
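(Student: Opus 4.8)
The plan is to verify each assertion directly from standard facts about holomorphic functions on the simply connected domain $\Omega$; the only point requiring real care will be the well-definedness of the two integral operators $\iota_0,\iota_1$.

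For $\theta_0$ in part (a) I would note that if $f\in\mathcal H(\Omega)$ then $f'\in\mathcal H(\Omega)$, so $\theta_0 f=zf'\in\mathcal H(\Omega)$, and since every $f\in\mathcal M$ is holomorphic at the origin with $f(0)=0$ we get $(\theta_0 f)(0)=0$; as $\partial_z$ and multiplication by $z$ are $\C$-linear, this shows $\theta_0\in\End_\C(\mathcal M)$. For $\iota_0$, the point is that for $f\in\mathcal M$ the quotient $t\mapsto f(t)/t$ has a removable singularity at $t=0$ (because $f(0)=0$), so it extends to a holomorphic function on $\Omega$ that is regular at $0$; since $\Omega$ is simply connected, the integral $\iota_0(f)(z)=\int_0^z f(t)/t\,dt$ is independent of the path chosen in $\Omega$ and defines a holomorphic function on $\Omega$ vanishing at $z=0$, whence $\iota_0(f)\in\mathcal M$. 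For $\iota_1$ the argument is the same, now with integrand $f(t)/(1-t)$; here $1\notin\Omega$, so $1/(1-t)$ is already holomorphic on $\Omega$ and no vanishing hypothesis on $f$ enters --- this is exactly why $\iota_1$ extends to all of $\mathcal H(\Omega)$, its output still lying in $\mathcal M$ since an integral starting at $0$ vanishes at $0$. Linearity of $\iota_0$ and $\iota_1$ is immediate from linearity of the integral.

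For part (b) I would simply differentiate: by construction $\iota_1(f)$ is an antiderivative on $\Omega$ of the holomorphic function $z\mapsto f(z)/(1-z)$, so the fundamental theorem of calculus gives $\partial_z\iota_1(f)=f/(1-z)$, and applying $\theta_0=z\partial_z$ yields $\theta_0\iota_1(f)=z\cdot f/(1-z)=\lambda f$ with $\lambda=z/(1-z)$, as claimed.

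The only genuinely substantive step --- and hence the main obstacle, modest though it is --- will be justifying that $\iota_0 f$ and $\iota_1 f$ are single-valued holomorphic functions on $\Omega$: this rests on the simple connectedness of $\Omega$ together with the observation that, once the apparent singularity at $0$ is removed in the $\iota_0$ case, both integrands are holomorphic throughout $\Omega$, the potential poles of $1/t$ and of $1/(1-t)$ being cancelled respectively by the condition $f(0)=0$ and by $1\notin\Omega$.
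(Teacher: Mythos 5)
Your proposal is correct and follows essentially the same route as the paper: a direct verification that each operator preserves holomorphy on $\Omega$ and the vanishing condition at $0$, followed by the fundamental theorem of calculus for (b). The only cosmetic difference is that the paper checks the local behaviour at $0$ via Taylor expansions (noting $\iota_1(f)\in\C\cdot\log(1-z)+\mathcal{M}$ for general $f\in\mathcal{H}(\Omega)$), whereas you phrase it via removable singularities and simple connectedness; both arguments carry the same content, and both tacitly use that elements of $\mathcal{M}$ are regular at $0$ even though $0\notin\Omega$.
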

    \begin{proof}
        (a) Each operator is clearly $\C$-linear. For any $f\in\mathcal{M}$, we can write the Taylor series of $f$ at 0 as follows:
        \begin{align*}
            f(z)=\sum_{n>0}c_nz^n\ (c_n\in\C).
        \end{align*}
        Hence, for $z$ in a neighbourhood of 0, we have
        \begin{align*}
            \theta_0(f(z))=\sum_{n>0}nc_nz^n,\quad\iota_0(f(z))=\sum_{n>0}\frac{c_n}{n}z^n.
        \end{align*}
        So, $\theta_0(f),\iota_0(f)\in\mathcal{M}$ follows. Since $f(z)/(1-z)=0$ holds at $z=0$, considering the Taylor series at 0, $\iota_1(f)\in\mathcal{M}$ also follows. In addition, if $f\in\mathcal{H}(\Omega)$, then we have
        \begin{align*}
            \int_0^z\frac{f(t)}{1-t}dt\in\C\cdot\log(1-z)+\mathcal{M}
        \end{align*}
        so we can extend the domain of $\iota_1$ to $\mathcal{H}(\Omega)$ because $\log(1-z)\in\mathcal{M}.$\par
        (b) If $F:=\iota_1(f)$, then we have $\theta_0\iota_1(f)=z\partial_z(F)=\lambda f.$
    \end{proof}
    Write $1_\Omega$ for the constant function on $\Omega$ valued at $1$. Then, from the above lemma, we have
    \begin{align*}
        \theta_0^{s_1+1}\iota_1\cdots\theta_0^{s_r+1}\iota_1(1_\Omega)\in\mathcal{M}\quad(s_1,\ldots,s_r\in\N). 
    \end{align*}
    Now, we consider Taylor series of $\theta_0^{p+1}\iota_1(1_\Omega)$ for any $p\in\N$ to obtain the following lemma.
    \begin{lem}
        For any $p\in\N$, we have:
        \begin{equation}
            \theta_0^{p+1}\iota_1(1_\Omega)=\sum_{n>0}n^pz^n,\quad \theta_0\iota_1\qty(\sum_{n>0}n^pz^n)=\sum_{r>n>0}n^pz^r.
        \end{equation}
        In particular, for $s_1,\ldots,s_r\in\N$,
        \begin{align}\label{eq.operator and polylog}
            \theta_0^{s_1+1}\iota_1\cdots\theta_0^{s_r+1}\iota_1(1_\Omega)=\sum_{n_1>\cdots>n_r>0}n_1^{s_1}\cdots n_r^{s_r}z^{n_1}.
        \end{align}
    \end{lem}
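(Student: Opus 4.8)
The plan is to carry out all computations at the level of Taylor expansions at $0$, which converge on the open disc $D\subset\Omega$, so that the rearrangements of series below are legitimate; by part (a) of the preceding lemma every function produced by $\theta_0$ and $\iota_1$ stays in $\mathcal{M}$, so all the intermediate expressions are well-defined.

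First I would record the innermost building block. Since $\iota_1(1_\Omega)(z)=\int_0^z\frac{dt}{1-t}=-\log(1-z)=\sum_{n>0}\frac{z^n}{n}$ and $\theta_0(z^n)=nz^n$, applying $\theta_0$ once yields $\theta_0\iota_1(1_\Omega)=\sum_{n>0}z^n$, and $p$ further applications of $\theta_0$ give $\theta_0^{p+1}\iota_1(1_\Omega)=\sum_{n>0}n^pz^n$, which is the first identity. For the second identity I would invoke part (b) of the previous lemma: for $f=\sum_{n>0}n^pz^n\in\mathcal{M}$ we have $\theta_0\iota_1(f)=\lambda f$; expanding $\lambda=\frac{z}{1-z}=\sum_{m>0}z^m$ and forming the Cauchy product gives $\lambda f=\sum_{N>0}\bigl(\sum_{0<n<N}n^p\bigr)z^N=\sum_{N>n>0}n^pz^N$, as asserted. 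The very same computation shows the ``one-step'' identity that will drive the induction: for an arbitrary $g=\sum_{m>0}a_mz^m\in\mathcal{M}$ and any $q\in\N$, $\theta_0^{q+1}\iota_1(g)=\theta_0^q(\lambda g)=\sum_{N>0}N^q\bigl(\sum_{0<m<N}a_m\bigr)z^N$.

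To obtain the displayed formula (\ref{eq.operator and polylog}) I would induct on $r$, reading the nested expression $\theta_0^{s_1+1}\iota_1\bigl(\theta_0^{s_2+1}\iota_1(\cdots)\bigr)$ from the inside out; the case $r=1$ is the first identity. For the inductive step put $g:=\theta_0^{s_2+1}\iota_1\cdots\theta_0^{s_r+1}\iota_1(1_\Omega)$, which by the inductive hypothesis equals $\sum_{n_2>\cdots>n_r>0}n_2^{s_2}\cdots n_r^{s_r}z^{n_2}$, i.e.\ its coefficient of $z^m$ is $a_m=\sum_{m>n_3>\cdots>n_r>0}m^{s_2}n_3^{s_3}\cdots n_r^{s_r}$. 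Applying the one-step identity with $q=s_1$ and then renaming $N\mapsto n_1$, $m\mapsto n_2$ turns the constraint $0<m<N$ into $n_1>n_2$ and produces exactly $\sum_{n_1>n_2>\cdots>n_r>0}n_1^{s_1}n_2^{s_2}\cdots n_r^{s_r}z^{n_1}$, which is the claim for depth $r$.

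There is no genuine obstacle here; the one point requiring care is purely organizational, namely keeping the nesting order straight — the \emph{outermost} operator block $\theta_0^{s_1+1}\iota_1$ must be the one contributing the exponent $s_1$ and the monomial $z^{n_1}$ attached to the largest index — together with checking at the outset that every intermediate function lies in $\mathcal{M}$, so that part (b) of the preceding lemma (the identity $\theta_0\iota_1(f)=\lambda f$) is applicable and the power-series expansions are valid on $D$.
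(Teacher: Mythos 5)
Your proposal is correct and follows essentially the same route as the paper: compute $\theta_0^{p+1}\iota_1(1_\Omega)$ directly, obtain the second identity from $\theta_0\iota_1(f)=\lambda f$ via the Cauchy product with the geometric series, and iterate. You merely make explicit the "one-step" coefficient identity and the induction on $r$ that the paper compresses into "by calculating inductively," which is a welcome clarification but not a different argument.
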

    \begin{proof}
    The proof of the claim is obtained from the computation:
        \begin{align*}
            \theta_0^{p+1}\iota_1(1_\Omega)=\theta_0^p(\lambda)=\theta_0^{p-1}&\left(\sum_{n>0}\theta_0(z^n)\right)=\theta_0^{p-1}\qty(\sum_{n>0}nz^n)=\cdots=\sum_{n>0}n^pz^n,\\
            \theta_0\iota_1\qty(\sum_{n>0}n^pz^n)&=\qty(\sum_{m>0}z^m)\qty(\sum_{n>0}n^pz^n)\\
            &=\lim_{m,n\to+\infty}\sum_{i=1}^m\sum_{k=1}^nk^pz^{k+i}\\
            &=\lim_{m,n\to+\infty}\sum_{r=2}^{m+n}\left(\sum_{r>i>0}(r-i)^pz^r\right)\\
            &=\lim_{m,n\to+\infty}\sum_{r=2}^{m+n}\qty(\sum_{r>i>0}i^pz^r)\\
            &=\sum_{r>n>0}n^pz^r.
        \end{align*}
        By calculating inductively, we get the equation (\ref{eq.operator and polylog}).
    \end{proof}
    Then, we define the multiple polylogarithms at non-positive indices on $D$. By the above lemma, this is the convergent series function on $D$.
    \begin{defi}
        Let $\mathbf{s}:=(s_1,\ldots,s_r)\in\N^r$ and $r\geq 0$. Then:
        \begin{align*}
            \polylog^-_{\mathbf{s}}(z):=\polylog_{-s_1,\ldots,-s_r}(z)=\sum_{n_1>\cdots>n_r>0}n_1^{s_1}\cdots n_r^{s_r}z^{n_1}\quad(z\in D)
        \end{align*}
        where if $\mathbf{s}=\ast$, we promise $\polylog^-_{\mathbf{s}}=1$.
    \end{defi}
    We easily see that this series is analytically continued to $\Omega$, i.e. $\polylog^-_{\mathbf{s}}(z)=\theta_0^{s_1+1}\iota_1\cdots\theta_0^{s_r+1}\iota_1(1_\Omega)$ on $\Omega$. In particular, the right hand side of (\ref{eq.operator and polylog}) shows us:
    \begin{equation}
        \polylog^-_\mathbf{s}(z)\in\Q[z,(1-z)^{-1}].
    \end{equation}
    Therefore, we regard $\polylog^-_\mathbf{s}:=\polylog^-_{\mathbf{s}}(z)$ as a $\Q$(or $\C$)-rational function in this paper.
\section{Words, free algebras and Magnus polynomials}
    We prepare combinatorial properties of MPL. Let $X:=\{x_0,x_1\},Y:=\{y_0,y_1,\ldots,y_n,\ldots\}_{n\in\N}$, and let $X^\ast,Y^\ast$ be free monoids respectively generated by $X,Y$.
    \begin{notation}
        Let $\N^\infty:=\bigsqcup_{r>0}\N^r(\textit{where }\N^0=\{\ast\})$ be the set of indices.
    \end{notation}
    \begin{lem}
        The map
        \begin{equation}
            \varphi_Y:\N^\infty\to Y^\ast;(s_1,\ldots,s_r)\mapsto y_{s_1}\cdots y_{s_r}
        \end{equation}
        is bijective. Here for the empty word $\epsilon_Y\in Y^\ast$, $\varphi_Y(\ast)=\epsilon_Y$.
    \end{lem}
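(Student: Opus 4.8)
The plan is to produce an explicit two‑sided inverse of $\varphi_Y$, using only the defining property of the free monoid $Y^\ast$: every element of $Y^\ast$ is by construction a finite sequence of letters from $Y$, and two such sequences coincide in $Y^\ast$ if and only if they have the same length and agree letter by letter. In particular there is no cancellation to contend with, in contrast with a free group.

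First I would define a candidate inverse $\psi_Y\colon Y^\ast\to\N^\infty$ as follows. Given $w\in Y^\ast$, write it in its unique form $w=y_{s_1}y_{s_2}\cdots y_{s_r}$ as a product of $r\ge 0$ generators, with $r=0$ corresponding to $w=\epsilon_Y$, and set $\psi_Y(w):=(s_1,\ldots,s_r)\in\N^r\subset\N^\infty$, together with the convention $\psi_Y(\epsilon_Y):=\ast$. This is well defined precisely because the expression of $w$ as a product of generators is unique in the free monoid. Next I would verify $\psi_Y\circ\varphi_Y=\mathrm{id}_{\N^\infty}$ and $\varphi_Y\circ\psi_Y=\mathrm{id}_{Y^\ast}$: applying $\varphi_Y$ to $(s_1,\ldots,s_r)$ yields the word $y_{s_1}\cdots y_{s_r}$, already in reduced form, so reading off the subscripts returns $(s_1,\ldots,s_r)$; conversely, for $w=y_{s_1}\cdots y_{s_r}$ we have $\psi_Y(w)=(s_1,\ldots,s_r)$ and $\varphi_Y(s_1,\ldots,s_r)=w$. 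The length‑zero case is matched by the convention $\varphi_Y(\ast)=\epsilon_Y$, $\psi_Y(\epsilon_Y)=\ast$, which is exactly the content of the parenthetical clause in the statement.

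There is essentially no obstacle here; the only point that needs attention is the bookkeeping for the empty word and the empty tuple $\ast$, and this is handled by the stated convention. Alternatively one could phrase the whole argument as an appeal to the universal property of $\N^\infty=\bigsqcup_{r\ge 0}\N^r$ as the free monoid on the set $\N$ (via $n\mapsto(n)$), transported across the evident bijection $Y\to\N$, $y_n\mapsto n$; but the explicit inverse $\psi_Y$ above is the cleanest presentation.
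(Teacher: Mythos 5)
Your proof is correct and matches the paper's approach: the paper simply declares the statement clear from the definition and then names the inverse $\psi_Y$, which is exactly the explicit two-sided inverse you construct. Spelling out the uniqueness of the word decomposition in the free monoid and the empty-word convention is just a more careful writing of the same argument.
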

    \begin{proof}
        It is clear by the definition of $\varphi_Y$.
    \end{proof}
    Let $\psi_Y$ be the inverse map of $\varphi_Y$. Then, for $w\in Y^\ast$, we define
    \begin{equation}
        \polylog_w:=\left\{
        \begin{array}{cl}
            \polylog_{\psi_Y(w)} &(\textit{if }w\neq \epsilon_Y)  \\
            1 &(\textit{if }w=\epsilon_Y) 
        \end{array}
        \right.,\quad\polylog^-_w:=\left\lbrace
        \begin{array}{cl}
            \polylog^-_{\psi_Y(w)} &(\textit{if }w\neq\epsilon_Y)  \\
            1 &(\textit{if }w=\epsilon_Y) 
        \end{array}
        \right..
    \end{equation}
    \begin{exa}
        \begin{enumerate}[(1)]
            \item 
            $\polylog_{y_1y_2y_3}=\polylog_{(1,2,3)}=\sum_{n_1>n_2>n_3>0}n_1^{-1}n_2^{-2}n_3^{-3}z^{n_1}.$
            \item 
            $\polylog^-_{y_4y_5y_6}=\polylog^-_{(4,5,6)}=\sum_{n_1>n_2>n_3>0}n_1^4n_2^5n_3^6z^{n_1}.$
        \end{enumerate}
    \end{exa}
    \begin{lem}
        Let $\epsilon_X$ be an empty word in $X^\ast$. Then,
        \begin{equation}
            p:Y^\ast\to(X^\ast x_1)\cup\{\epsilon_X\};y_{s_1}\cdots y_{s_r}\mapsto x_0^{s_1}x_1\cdots x_0^{s_r}x_1
        \end{equation}
        is a monoid isomorphism.
    \end{lem}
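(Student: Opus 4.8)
The plan is to produce an explicit two-sided inverse for $p$. First I would record that the target $(X^\ast x_1)\cup\{\epsilon_X\}$ really is a submonoid of $X^\ast$: a concatenation of two words that are each either empty or end in $x_1$ is again empty or ends in $x_1$, and $\epsilon_X$ is the identity. That $p$ is a monoid homomorphism is then immediate from its defining formula, since $p(y_{s_1}\cdots y_{s_r})=(x_0^{s_1}x_1)\cdots(x_0^{s_r}x_1)$ is literally the concatenation of the words $p(y_{s_i})=x_0^{s_i}x_1$, and $p(\epsilon_Y)=\epsilon_X$.

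The core of the proof is the \emph{unique block decomposition} of words ending in $x_1$: every $w\in X^\ast x_1$ is uniquely of the form $x_0^{s_1}x_1\cdots x_0^{s_r}x_1$ with $r\geq 1$ and $s_i\in\N$. For existence, read $w$ from the left, let $s_1$ be the length of the initial (possibly empty) run of $x_0$'s --- which is necessarily followed by an $x_1$ because $w$ contains at least one $x_1$ --- delete the prefix $x_0^{s_1}x_1$, and note that what remains is either empty or again ends in $x_1$; one then finishes by induction on word length. Uniqueness holds because $X^\ast$ is free: two such expressions equal as words have the same letter sequence, which forces equal $r$ and equal $s_i$. Equivalently, this says the map $\varphi_X:\N^\infty\to(X^\ast x_1)\cup\{\epsilon_X\}$ sending $(s_1,\ldots,s_r)$ to $x_0^{s_1}x_1\cdots x_0^{s_r}x_1$ and $\ast$ to $\epsilon_X$ is a bijection.

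Finally I would observe that $p=\varphi_X\circ\psi_Y$ by construction, with $\psi_Y=\varphi_Y^{-1}$ the bijection of the preceding lemma; as a composite of bijections $p$ is bijective, and combined with the homomorphism property this gives the claim, with inverse $\varphi_Y\circ\varphi_X^{-1}$ --- the map assigning to a word ending in $x_1$ the sequence of its $x_0$-block lengths, read off as an element of $Y^\ast$. The only real content is the unique-decomposition step, and that is a routine consequence of freeness of $X^\ast$, so I anticipate no genuine obstacle; one could equally well verify injectivity and surjectivity of $p$ directly, which comes down to the same block-counting argument.
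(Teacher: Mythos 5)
Your proof is correct and fills in exactly the routine verification (homomorphism property plus unique block decomposition of words ending in $x_1$) that the paper dismisses with ``clear by the definition of $p$.'' There is no divergence in approach, only in the level of detail.
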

    \begin{proof}
        It is clear by the definition of $p$.
    \end{proof}
    Let $R$ be a commutative integral domain of characteristic $0$ and $R\langle X\rangle=R\langle x_0,x_1\rangle$(respectively $R\langle Y\rangle=R\langle y_0,y_1,\ldots\rangle$) be a unital free associative algebra generated by $x_0,x_1$ on $R$ (resp. generated by $y_0,y_1,\ldots$ on $R$).
    \begin{defi}
        We define an $R$-algebra homomorphism $\pi:R\langle X\rangle x_1\to R\langle Y\rangle$ as the $R$-algebra extension of the mapping $p^{-1}:x_0^kx_1\mapsto y_k$.
    \end{defi}
    \begin{lem}\label{index algebra}
        $\pi$ is an isomorphism.
    \end{lem}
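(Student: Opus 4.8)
The plan is to prove the lemma by exhibiting an explicit two‑sided inverse of $\pi$ and reducing everything to the two monoid‑level bijections already in place, namely the bijection $\varphi_Y\colon\N^\infty\to Y^\ast$ and the monoid isomorphism $p\colon Y^\ast\to(X^\ast x_1)\cup\{\epsilon_X\}$. So the proof will be essentially bookkeeping, with unique factorization of words as its only real ingredient.

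First I would record the structural fact that legitimizes the definition of $\pi$: the algebra $R\langle X\rangle x_1$ (understood to contain the unit $\epsilon_X$, see the remark below) is a \emph{free} associative $R$-algebra on the countable set $\{x_0^k x_1\}_{k\in\N}$. Indeed, every word of $X^\ast$ ending in $x_1$ factors uniquely as $(x_0^{s_1}x_1)(x_0^{s_2}x_1)\cdots(x_0^{s_r}x_1)$ — one simply reads off the lengths of the maximal $x_0$-runs preceding each occurrence of $x_1$ — so the monomials in the elements $x_0^k x_1$ are pairwise distinct members of the monomial $R$-basis $X^\ast$ of $R\langle X\rangle$, hence $R$-linearly independent. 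Consequently any set map $\{x_0^k x_1\}\to A$ into a unital $R$-algebra extends uniquely to a unital $R$-algebra homomorphism; this is exactly the content of "the $R$-algebra extension of $p^{-1}$", and it is what makes $\pi$ well defined.

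Next, by the same freeness principle applied to $R\langle Y\rangle$ (free on $\{y_k\}$), I would introduce $\rho\colon R\langle Y\rangle\to R\langle X\rangle x_1$, the unique unital $R$-algebra homomorphism with $\rho(y_k)=x_0^k x_1$. Then $\rho\circ\pi$ and $\pi\circ\rho$ are unital $R$-algebra endomorphisms fixing the free generators $x_0^k x_1$, resp. $y_k$, hence each is the identity; thus $\pi^{-1}=\rho$ and we are done. Equivalently, one can argue at the level of $R$-modules: unwinding the construction, $\pi$ sends the basis word $x_0^{s_1}x_1\cdots x_0^{s_r}x_1$ to $y_{s_1}\cdots y_{s_r}$ and $\epsilon_X$ to $\epsilon_Y$, i.e.\ $\pi$ restricted to the basis $(X^\ast x_1)\cup\{\epsilon_X\}$ coincides with $p^{-1}$, which is a bijection onto $Y^\ast$ by the preceding lemma; hence $\pi$ is an $R$-module isomorphism, and being multiplicative it is an $R$-algebra isomorphism.

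I do not expect any serious obstacle: the statement is a dictionary lemma and all of it follows from unique factorization of words ending in $x_1$. The only point that deserves a word of care is the implicit convention that $R\langle X\rangle x_1$ is taken together with the unit $\epsilon_X$; without it the image of $\pi$ would be the augmentation ideal of $R\langle Y\rangle$ rather than all of $R\langle Y\rangle$, so this convention should be made explicit. Once it is fixed, the freeness observation of the second paragraph carries the whole argument.
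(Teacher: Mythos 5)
Your proof is correct and takes essentially the same route the paper intends: the paper simply declares the lemma clear from the definition of $\pi$ and the preceding lemma that $p$ is a monoid isomorphism, and your write-up supplies exactly the missing bookkeeping (unique factorization of words ending in $x_1$, hence freeness of the source on $\{x_0^k x_1\}_{k\in\N}$, and the explicit inverse $\rho$ with $\rho(y_k)=x_0^k x_1$). Your remark that one must adjoin the empty word $\epsilon_X$ to $R\langle X\rangle x_1$ for $\pi$ to hit the unit of $R\langle Y\rangle$ is a legitimate point of care that the paper glosses over.
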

    \begin{proof}
        It is clear from the definition of $\pi$ and the above lemma.
    \end{proof}
    For any $u,v\in R\langle X\rangle$, we define Lie bracket $[u,v]:=uv-vu$, and we define $x_1^{(n)}\in R\langle X\rangle$ as follows:
    \begin{equation}
        x_1^{(0)}:=x_1,\quad x_1^{(n+1)}:=[x_0,x_1^{(n)}]\quad(n=0,1,2,\ldots).
    \end{equation}
    Then, the following holds.
    \begin{lem}\textup{(\cite[(5),(6)]{Na23}, \cite[(4)]{Ma37})}\ 
        For $n\in\N$,
        \begin{equation}
            x_1^{(n)}=\sum_{k=0}^n(-1)^k\binom{n}{k}x_0^{n-k}x_1x_0^k.
        \end{equation}
    \end{lem}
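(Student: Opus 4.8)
The plan is to prove the identity by induction on $n$, the only real content of the inductive step being Pascal's rule $\binom{n}{k}+\binom{n}{k-1}=\binom{n+1}{k}$, used together with the standard convention that a binomial coefficient vanishes whenever its lower index is negative or exceeds the upper one.

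For the base case $n=0$, the asserted formula reads $x_1^{(0)}=\binom{0}{0}x_0^0x_1x_0^0=x_1$, which is the definition of $x_1^{(0)}$. For the inductive step, assume the formula holds for some $n\geq 0$. By definition $x_1^{(n+1)}=[x_0,x_1^{(n)}]=x_0\,x_1^{(n)}-x_1^{(n)}\,x_0$, and substituting the inductive hypothesis into both terms gives
\begin{align*}
    x_0\,x_1^{(n)}&=\sum_{k=0}^{n}(-1)^k\binom{n}{k}x_0^{\,n+1-k}x_1x_0^{\,k},\\
    x_1^{(n)}\,x_0&=\sum_{k=0}^{n}(-1)^k\binom{n}{k}x_0^{\,n-k}x_1x_0^{\,k+1}=\sum_{k=1}^{n+1}(-1)^{k-1}\binom{n}{k-1}x_0^{\,n+1-k}x_1x_0^{\,k},
\end{align*}
where the second equality is just the reindexing $k\mapsto k+1$. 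Subtracting, the coefficient of $x_0^{\,n+1-k}x_1x_0^{\,k}$ for $0\leq k\leq n+1$ becomes $(-1)^k\bigl(\binom{n}{k}+\binom{n}{k-1}\bigr)=(-1)^k\binom{n+1}{k}$, which is exactly the claimed expansion of $x_1^{(n+1)}$, completing the induction.

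If one prefers a non-inductive route, I would instead introduce the commuting operators $L$ and $R$ of left and right multiplication by $x_0$ on $R\langle X\rangle$; then $u\mapsto[x_0,u]$ is $L-R$, so $x_1^{(n)}=(L-R)^n(x_1)$, and expanding $(L-R)^n$ by the binomial theorem (valid since $L$ and $R$ commute) and using $L^{\,n-k}R^{\,k}(x_1)=x_0^{\,n-k}x_1x_0^{\,k}$ produces the formula immediately. In either approach there is no substantial obstacle; the only point requiring a little care is the treatment of the boundary indices $k=0$ and $k=n+1$ in the inductive step, which is exactly what the vanishing convention for binomial coefficients takes care of. (This is the classical formula of Magnus \cite[(4)]{Ma37}; see also \cite[(5),(6)]{Na23}.)
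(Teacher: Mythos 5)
Your inductive argument is correct and is essentially identical to the paper's own proof: both proceed by induction on $n$, substitute the inductive hypothesis into $[x_0,x_1^{(n)}]=x_0x_1^{(n)}-x_1^{(n)}x_0$, reindex the second sum, and apply Pascal's rule to the boundary and interior coefficients. Your alternative remark that $x_1^{(n)}=(L-R)^n(x_1)$ with $L,R$ the commuting left/right multiplications by $x_0$ is a clean one-line justification not present in the paper, but the main proof you give matches it step for step.
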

    \begin{proof}
        We use induction on $n$. The case for $n=0$ is clear. Let $n>0$ and we assume that the case for $n-1$ is true. Then, we complete the proof for $n$ by the calculation:
        \begin{align*}
            x_1^{(n)}&=x_0x_1^{(n-1)}-x_1^{(n-1)}x_0\\
            &=\sum_{k=0}^{n-1}(-1)^k\binom{n-1}{k}x_0x_0^{n-1-k}x_1x_0^k+\sum_{k=0}^{n-1}(-1)^{k+1}\binom{n-1}{k}x_0^{n-1-k}x_1x_0^kx_0\\
            &=\sum_{k=0}^{n-1}(-1)^k\binom{n-1}{k}x_0^{n-k}x_1x_0^k+\sum_{k=1}^{n}(-1)^{k}\binom{n-1}{k-1}x_0^{n-k}x_1x_0^k\\
            &=x_0^nx_1+\sum_{k=1}^{n-1}(-1)^k\left(\binom{n-1}{k}+\binom{n-1}{k-1}\right)x_0^{n-k}x_1x_0^k+(-1)^nx_1x_0^n\\
            &=\sum_{k=0}^n(-1)^k\binom{n}{k}x_0^{n-k}x_1x_0^k.
        \end{align*}
    \end{proof}
    We define the Magnus polynomials $M^{(\mathbf{k})}\in R\langle X\rangle$ for $\mathbf{k}=(k_1,\ldots,k_n;k_\infty)\in\N^\infty\times\N$ as follows:
    \begin{defi}[Magnus polynomial, \cite{Ma37}]
        \begin{equation}
            M^{(\mathbf{k})}:=x_1^{(k_1)}\cdots x_1^{(k_n)}x_0^{k_\infty}\in R\langle X\rangle.
        \end{equation}
        Here, let us conventionally write $(\ast,k_\infty)\in\N^0\times\N\subset\N^\infty\times\N$ as $(;k_\infty)$.
    \end{defi}
    \begin{rem}
        W. Magnus proved that the set of all Magnus polynomials is the basis of $R\langle X\rangle$, which is the main result of \cite{Ma37}.
    \end{rem}
    Then the following holds from the above lemma.
    \begin{cor}\label{expantion of Magnus poly}
        \begin{equation}
            M^{(\mathbf{k})}=\sum_{i_1=0}^{k_1}\cdots\sum_{i_n=0}^{k_n}\left(\prod_{j=1}^n(-1)^{i_j}\binom{k_j}{i_j}\right)x_0^{k_1-i_1}x_1x_0^{k_2-i_2+i_1}x_1\cdots x_0^{k_n-i_n+i_{n-1}}x_1x_0^{k_\infty+i_n}\in R\langle X\rangle.
        \end{equation}
    \end{cor}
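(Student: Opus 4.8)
The plan is to derive this expansion directly from the preceding lemma by substituting the closed form for each bracket power $x_1^{(k_j)}$ into the defining product $M^{(\mathbf{k})}=x_1^{(k_1)}\cdots x_1^{(k_n)}x_0^{k_\infty}$ and multiplying out in $R\langle X\rangle$.

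First I would invoke the lemma to write, for each $j\in\{1,\ldots,n\}$,
\[
    x_1^{(k_j)}=\sum_{i_j=0}^{k_j}(-1)^{i_j}\binom{k_j}{i_j}x_0^{k_j-i_j}x_1x_0^{i_j}.
\]
Substituting these into the definition of $M^{(\mathbf{k})}$, appending the factor $x_0^{k_\infty}$, and distributing, I obtain a sum over all tuples $(i_1,\ldots,i_n)$ with $0\le i_j\le k_j$, in which the tuple $(i_1,\ldots,i_n)$ contributes the scalar $\prod_{j=1}^n(-1)^{i_j}\binom{k_j}{i_j}$ times the monomial
\[
    x_0^{k_1-i_1}x_1x_0^{i_1}\cdot x_0^{k_2-i_2}x_1x_0^{i_2}\cdots x_0^{k_n-i_n}x_1x_0^{i_n}\cdot x_0^{k_\infty}.
\]

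The last step is to collapse adjacent powers of $x_0$ using $x_0^ax_0^b=x_0^{a+b}$: for $1\le j\le n-1$ the block $x_0^{i_j}x_0^{k_{j+1}-i_{j+1}}$ becomes $x_0^{k_{j+1}-i_{j+1}+i_j}$, while the trailing $x_0^{i_n}x_0^{k_\infty}$ becomes $x_0^{k_\infty+i_n}$, turning the displayed monomial into exactly $x_0^{k_1-i_1}x_1x_0^{k_2-i_2+i_1}x_1\cdots x_0^{k_n-i_n+i_{n-1}}x_1x_0^{k_\infty+i_n}$; collecting coefficients then gives the stated formula. Equivalently, one can run a short induction on $n$ whose inductive step is the single multiplication $\big(x_1^{(k_1)}\cdots x_1^{(k_{n-1})}\big)\cdot x_1^{(k_n)}$, but the direct expansion keeps the bookkeeping cleanest. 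I do not expect any real obstacle here: the only point that needs care is the index shift in the exponents --- the power of $x_0$ between the $j$-th and $(j+1)$-th occurrence of $x_1$ is $k_{j+1}-i_{j+1}+i_j$ rather than $k_{j+1}-i_{j+1}$, because the tail $x_0^{i_j}$ produced by expanding $x_1^{(k_j)}$ is absorbed into that slot.
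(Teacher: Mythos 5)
Your proof is correct and matches the paper's intent exactly: the paper states this corollary follows immediately from the preceding lemma, and your direct substitution of $x_1^{(k_j)}=\sum_{i_j=0}^{k_j}(-1)^{i_j}\binom{k_j}{i_j}x_0^{k_j-i_j}x_1x_0^{i_j}$ followed by merging adjacent powers of $x_0$ is precisely that argument, with the key index shift (the tail $x_0^{i_j}$ absorbed into the next slot) correctly identified.
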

\section{Proofs of Theorem \ref{main4} and Theorem \ref{main1}}
    We begin by proving Theorem \ref{main1} of Introduction.
    \begin{prop}[\cite{D17a} p.175]\label{polylog and array binomial}
        For $r>1,w=y_{s_1}\cdots y_{s_r}\in Y^\ast$,
        \begin{align*}
            \polylog_w^-=&\sum_{k_1=0}^{s_1}\sum_{k_2=0}^{s_1+s_2-k_1}\cdots\sum_{k_{r-1}=0}^{(s_1+\cdots+s_{r-1})-(k_1+\cdots+k_{r-2})}\binom{s_1}{k_1}\binom{s_1+s_2-k_1}{k_2}\cdots\\
            &\binom{s_1+\cdots+s_{r-1}-k_1-\cdots-k_{r-2}}{k_{r-1}}\polylog^-_{y_{k_1}}\polylog^-_{y_{k_2}}\cdots\polylog_{y_{k_{r-1}}}^-\polylog^-_{y_{s_1+s_2+\cdots+s_r-k_1-\cdots-k_{r-1}}}.
        \end{align*}
    \end{prop}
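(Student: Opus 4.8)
The plan is to isolate a single Leibniz-type identity relating the operator $\theta_0=z\partial_z$ to multiplication by $\lambda=z/(1-z)$, and then to iterate it so that the nested binomial sums appear one layer at a time. First I would record the basic recursion implicit in Section 2: from the expression $\polylog^-_{\mathbf{s}}=\theta_0^{s_1+1}\iota_1\cdots\theta_0^{s_r+1}\iota_1(1_\Omega)$, from the identity $\theta_0\iota_1(f)=\lambda f$ valid for $f\in\mathcal{M}$, and from the fact that each $\polylog^-$ lies in $\mathcal{M}$, one obtains
\[
  \polylog^-_{y_{s_1}\cdots y_{s_r}}=\theta_0^{s_1}\bigl(\lambda\cdot\polylog^-_{y_{s_2}\cdots y_{s_r}}\bigr)\qquad(r\geq 1),
\]
with the convention that for $r=1$ the right factor is $\theta_0^{s_1}(\lambda)=\polylog^-_{y_{s_1}}$. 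Reading this backwards yields $\theta_0^{j}\bigl(\polylog^-_{y_{s_2}\cdots y_{s_r}}\bigr)=\polylog^-_{y_{j+s_2}y_{s_3}\cdots y_{s_r}}$ for every $j\in\N$ and $r\geq 2$, together with $\theta_0^{k}(\lambda)=\polylog^-_{y_k}$.

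The key step is the following identity: for every $g=\sum_{n\geq 1}d_n z^n\in\mathcal{M}$ and every $s\in\N$,
\[
  \theta_0^{s}(\lambda\cdot g)=\sum_{k=0}^{s}\binom{s}{k}\,\theta_0^{k}(\lambda)\cdot\theta_0^{s-k}(g).
\]
I would prove this by comparing Taylor coefficients at $0$. Writing $\lambda=\sum_{m\geq 1}z^m$ and forming the Cauchy products, the coefficient of $z^N$ on the left is $N^{s}\sum_{n=1}^{N-1}d_n$, while the coefficient on the right is $\sum_{n=1}^{N-1}\bigl(\sum_{k=0}^{s}\binom{s}{k}(N-n)^{k}n^{s-k}\bigr)d_n$; the inner sum equals $\bigl((N-n)+n\bigr)^{s}=N^{s}$ by the binomial theorem, so the two sides agree. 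All rearrangements are legitimate by absolute convergence on $D$ (equivalently, one may carry out the entire argument with formal power series).

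Combining these two ingredients, with $g=\polylog^-_{y_{s_2}\cdots y_{s_r}}$ and $s=s_1$, gives the one-step identity
\[
  \polylog^-_{y_{s_1}\cdots y_{s_r}}=\sum_{k_1=0}^{s_1}\binom{s_1}{k_1}\,\polylog^-_{y_{k_1}}\cdot\polylog^-_{y_{s_1+s_2-k_1}y_{s_3}\cdots y_{s_r}}\qquad(r\geq 2),
\]
which for $r=2$ is exactly the assertion. For $r>2$ I would then induct on $r$: apply the inductive hypothesis in depth $r-1$ to each factor $\polylog^-_{y_{t}y_{s_3}\cdots y_{s_r}}$ with $t=s_1+s_2-k_1$, and substitute this value of $t$ into its upper summation limits $t,\ t+s_3-k_2,\ \dots$ and into the matching binomial coefficients; this turns them into $s_1+s_2-k_1,\ s_1+s_2+s_3-k_1-k_2,\ \dots$ and the last index into $s_1+\cdots+s_r-k_1-\cdots-k_{r-1}$, reproducing precisely the nested sum in the statement.

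The only genuine content is the key identity of the second paragraph, and even that is a short coefficient computation. The part that needs care is the index bookkeeping in the induction: one must keep the running partial sums $t_j=s_1+\cdots+s_j-k_1-\cdots-k_{j-1}$ aligned with the summation bounds and with the arguments of the binomial coefficients throughout the substitution. This is presumably the place where the typographical slip in \cite{D17a} occurs, and the one spot in the argument where I would write things out in full.
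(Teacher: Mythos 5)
Your proposal is correct and follows essentially the same route as the paper: both reduce the statement to the one-step identity $\polylog^-_{y_{s_1}\cdots y_{s_r}}=\sum_{k_1=0}^{s_1}\binom{s_1}{k_1}\polylog^-_{y_{k_1}}\polylog^-_{y_{s_1+s_2-k_1}y_{s_3}\cdots y_{s_r}}$ and then iterate it over the depth. The only (cosmetic) difference is that you verify the iterated Leibniz identity $\theta_0^{s}(\lambda g)=\sum_k\binom{s}{k}\theta_0^k(\lambda)\theta_0^{s-k}(g)$ by comparing Taylor coefficients via the binomial theorem, whereas the paper obtains it by repeatedly applying the product rule $\theta_0(fg)=(\theta_0 f)g+f(\theta_0 g)$.
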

    \begin{proof}
        First, note that $\theta_0(fg)=(\theta_0f)g+f(\theta_0g)$ for any $f,g\in\C[z]$, we have
        \begin{align*}
            \polylog_{y_{s_1}^-\cdots y_{s_r}}^-&=(\theta_0^{s_1+1}\iota_1\cdots\theta_0^{s_r+1}\iota_1)1_\Omega\\
            &=\sum_{k_1=0}^{s_1}\binom{s_1}{k_1}\polylog_{y_{k_1}}^-\polylog_{y_{s_1+s_2-k_1}y_{s_3}\cdots y_{s_r}}^-.
        \end{align*}
        Then, for any $r>1$, we obtain by inductively applying above 
        \begin{align*}
            \polylog_{y_{s_1}\cdots y_{s_r}}^-&=\sum_{k_1=0}^{s_1}\binom{s_1}{k_1}\polylog_{y_{k_1}}^-\polylog_{y_{s_1+s_2-k_1}y_{s_3}\cdots y_{s_r}}^-\\
            &=\sum_{k_1=0}^{s_1}\sum_{k_2=0}^{s_1+s_2-k_1}\binom{s_1}{k_1}\binom{s_1+s_2-k_1}{k_2}\polylog_{y_{k_1}}^-\polylog_{y_{k_2}}^-\polylog_{y_{s_1+s_2+s_3-k_1-k_2}y_{s_4}\cdots y_{s_r}}^-\\
            &=\cdots\\
            &=\sum_{k_1=0}^{s_1}\sum_{k_2=0}^{s_1+s_2-k_1}\cdots\sum_{k_{r-1}=0}^{(s_1+\cdots+s_{r-1})-(k_1+\cdots+k_{r-2})}\binom{s_1}{k_1}\binom{s_1+s_2-k_1}{k_2}\cdots\\
            &\binom{s_1+\cdots+s_{r-1}-k_1-\cdots-k_{r-2}}{k_{r-1}}\polylog^-_{y_{k_1}}\polylog^-_{y_{k_2}}\cdots\polylog_{y_{k_{r-1}}}^-\polylog^-_{y_{s_1+s_2+\cdots+s_r-k_1-\cdots-k_{r-1}}}.
        \end{align*}
    \end{proof}
    \begin{rem}
        The equation in this proposition is appearing in the proof of \cite[Theorem 2]{D17a} with some typographic error, which correct in the above.
    \end{rem}
    \begin{cor}
        For $n,r\in\N$, $\polylog_{y_0^ny_r}^-=(\polylog^-_{y_0})^n\polylog^-_{y_r}.$
    \end{cor}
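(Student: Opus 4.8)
The plan is to obtain the corollary as a direct specialization of Proposition \ref{polylog and array binomial}. Write the word as $w=y_{s_1}\cdots y_{s_{n+1}}$ with $s_1=\cdots=s_n=0$ and $s_{n+1}=r$, so that the depth equals $n+1$. In the nested sum of Proposition \ref{polylog and array binomial} the summation variable $k_j$ ranges from $0$ to $(s_1+\cdots+s_j)-(k_1+\cdots+k_{j-1})$ for $j=1,\ldots,n$. I would show by induction on $j$ that every $k_j$ is forced to be $0$: the case $j=1$ is immediate since the upper limit is $s_1=0$, and if $k_1=\cdots=k_{j-1}=0$ then for $j\le n$ the upper limit is $s_1+\cdots+s_j-(k_1+\cdots+k_{j-1})=0$, hence $k_j=0$. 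Consequently only one term of the multi-sum survives, all the binomial coefficients equal $\binom{0}{0}=1$, and the trailing subscript becomes $s_1+\cdots+s_{n+1}-k_1-\cdots-k_n=r$. This collapses the identity to $\polylog^-_w=(\polylog^-_{y_0})^n\polylog^-_{y_r}$. Since Proposition \ref{polylog and array binomial} is stated only for depth $>1$, the case $n=0$ must be noted separately, but there the assertion reads $\polylog^-_{y_r}=\polylog^-_{y_r}$ and is trivial.

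Alternatively, and perhaps more transparently, I would argue from the operator presentation $\polylog^-_{y_0^ny_r}=(\theta_0\iota_1)^n\,\theta_0^{r+1}\iota_1(1_\Omega)$. By the second Lemma of Section~2, $g:=\theta_0^{r+1}\iota_1(1_\Omega)=\sum_{m>0}m^rz^m=\polylog^-_{y_r}$ lies in the ideal $\mathcal{M}$, and by part (b) of the first Lemma, $\theta_0\iota_1(f)=\lambda f$ for every $f\in\mathcal{M}$. Because $\mathcal{M}$ is an ideal and $\lambda=z/(1-z)\in\mathcal{M}$, a short induction gives $(\theta_0\iota_1)^j(g)=\lambda^jg\in\mathcal{M}$ for all $j\ge 0$; taking $j=n$ yields $\polylog^-_{y_0^ny_r}=\lambda^n g$. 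Finally $\polylog^-_{y_0}=\theta_0\iota_1(1_\Omega)=\sum_{m>0}z^m=\lambda$, so $\lambda^n g=(\polylog^-_{y_0})^n\polylog^-_{y_r}$, which proves the claim for all $n\in\N$ at once.

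There is essentially no serious obstacle. In the first approach the only point requiring care is the bookkeeping of the nested summation ranges — checking that the forced vanishing $k_1=\cdots=k_n=0$ propagates correctly and that the last index equals $r$; in the second approach the only thing to verify is that one remains inside $\mathcal{M}$ at each stage, which is automatic since $\mathcal{M}$ is an ideal containing $\lambda$, so that part (b) of the first Lemma applies at every step.
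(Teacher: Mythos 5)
Your first argument is exactly the intended one: the paper states this corollary immediately after Proposition \ref{polylog and array binomial} with no separate proof, and the specialization $s_1=\cdots=s_n=0$, $s_{n+1}=r$ does collapse the nested sums correctly, since each upper limit $(s_1+\cdots+s_j)-(k_1+\cdots+k_{j-1})$ vanishes for $j\le n$ and forces $k_1=\cdots=k_n=0$, leaving the single term $(\polylog^-_{y_0})^n\polylog^-_{y_r}$; your remark that the depth-one case $n=0$ is trivial is a point the paper glosses over. Your second, operator-theoretic argument via $\polylog^-_{y_0^ny_r}=(\theta_0\iota_1)^n\theta_0^{r+1}\iota_1(1_\Omega)=\lambda^n\polylog^-_{y_r}$ is also correct and is arguably more transparent: it needs only Lemma 2.1(b) and the fact that $\mathcal{M}$ is an ideal containing $\lambda=\polylog^-_{y_0}$, avoiding the binomial bookkeeping entirely, though it buys nothing beyond this special case, whereas the specialization route shows how the corollary sits inside the general expansion formula.
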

    \begin{exa}
        \begin{enumerate}[(1)]
            \item 
            $\polylog^-_{y_0^r}=(\polylog^-_{y_0})^r=\displaystyle\left(\frac{z}{1-z}\right)^r.$
            \item 
            $\displaystyle\polylog^-_{y_1^2}=\polylog^-_{y_0}\polylog^-_{y_2}+(\polylog^-_{y_1})^2=\left(\frac{z}{1-z}\right)\left(\frac{z+z^2}{(1-z)^3)}\right)+\left(\frac{z}{(1-z)^2}\right)^2=\frac{2z^2+z^3}{(1-z)^4}.$
        \end{enumerate}
    \end{exa}
    From the above examples, for $n\in\N$ we have
    \begin{equation}
        \polylog_{y_1}^-\polylog_{y_n}^-=\polylog_{y_1y_n}^--\polylog_{y_0y_n}^-.
    \end{equation}
    Then we apply $\theta_0$ to both side, we have
    \begin{align*}
        \polylog_{y_2}^-\polylog_{y_n}^-&=\polylog_{y_2y_n}^--\polylog_{y_1y_{n+1}}^--\polylog_{y_1}^-\polylog_{y_{n+1}}^-\\
        &=\polylog_{y_2y_n}^--\polylog_{y_1y_{n+1}}^--(\polylog_{y_1y_{n+1}}^--\polylog_{y_0y_{n+2}}^-)\\
        &=\polylog^-_{y_2y_n}-2\polylog_{y_1y_{n+1}}^-+\polylog_{y_0y_{n+2}}^-.
    \end{align*}
    By calculating inductively, we obtain the following.
    \begin{theo}\label{product between literature and word}
        For $m,r\in\N,w=y_rw'\in Y^\ast$,
        \begin{equation}
            \polylog_{y_m}^-\polylog_w^-=\sum_{k=0}^m(-1)^k\binom{m}{k}\polylog_{y_{m-k}y_{r+k}w'}^-.
        \end{equation}
    \end{theo}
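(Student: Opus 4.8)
The plan is to prove the identity by induction on $m$, exactly along the lines of the $m=0,1,2$ computations displayed just above the statement. The two ingredients are the operator representation $\polylog^-_{y_{s_1}\cdots y_{s_r}}=\theta_0^{s_1+1}\iota_1\cdots\theta_0^{s_r+1}\iota_1(1_\Omega)$ and the fact that $\theta_0=z\partial_z$ is a derivation, so that $\theta_0(fg)=(\theta_0 f)g+f(\theta_0 g)$ on $\Q[z,(1-z)^{-1}]$.

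First I would isolate the elementary observation that for every word $v=y_a v'\in Y^\ast$ one has $\theta_0(\polylog^-_v)=\polylog^-_{y_{a+1}v'}$; in particular $\polylog^-_{y_{m+1}}=\theta_0(\polylog^-_{y_m})$ and $\theta_0(\polylog^-_w)=\polylog^-_{y_{r+1}w'}$. For $a\geq 1$ this is immediate from the operator formula, and for $a=0$ it follows from $\theta_0\iota_1(f)=\lambda f$ on $\mathcal{M}$: indeed $\polylog^-_{y_0 v'}=\theta_0\iota_1(\polylog^-_{v'})=\lambda\,\polylog^-_{v'}$ while $\polylog^-_{y_1 v'}=\theta_0\iota_1(\theta_0\iota_1(\polylog^-_{v'}))=\theta_0(\lambda\,\polylog^-_{v'})$. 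The base case $m=0$ is then $\polylog^-_{y_0}\polylog^-_w=\lambda\,\polylog^-_w=\polylog^-_{y_0 y_r w'}$, using $\polylog^-_{y_0}=\theta_0\iota_1(1_\Omega)=\lambda$.

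For the inductive step, assume the formula holds for $m$. Applying the Leibniz rule to $\theta_0(\polylog^-_{y_m}\polylog^-_w)$ and rearranging gives
\[
\polylog^-_{y_{m+1}}\polylog^-_w=\theta_0(\polylog^-_{y_m})\,\polylog^-_w=\theta_0\!\left(\polylog^-_{y_m}\polylog^-_w\right)-\polylog^-_{y_m}\,\polylog^-_{y_{r+1}w'}.
\]
Now expand both $\polylog^-_{y_m}\polylog^-_w$ (index word $y_r w'$) and $\polylog^-_{y_m}\polylog^-_{y_{r+1}w'}$ (index word $y_{r+1}w'$) by the inductive hypothesis; apply $\theta_0$ termwise to the first expansion via the observation above, which replaces each $\polylog^-_{y_{m-k}y_{r+k}w'}$ by $\polylog^-_{y_{m+1-k}y_{r+k}w'}$; shift the summation variable by one in the second expansion; and collect coefficients. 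Pascal's identity $\binom{m}{k}+\binom{m}{k-1}=\binom{m+1}{k}$ then converts the result precisely into $\sum_{k=0}^{m+1}(-1)^k\binom{m+1}{k}\polylog^-_{y_{m+1-k}y_{r+k}w'}$, which closes the induction.

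The argument is essentially bookkeeping; the only two places that need care are the $a=0$ case of the preliminary observation — where one genuinely needs $\theta_0\iota_1=(\lambda\cdot{-})$ rather than just the operator formula — and the treatment of the boundary terms $k=0$ and $k=m+1$ when merging the two sums. I do not anticipate any substantive obstacle beyond this index-chasing.
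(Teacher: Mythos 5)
Your proposal is correct and follows essentially the same route as the paper: induction on $m$, the Leibniz rule for $\theta_0$ applied to $\polylog^-_{y_m}\polylog^-_w$, the observation that $\theta_0$ raises the first index of a word, the inductive hypothesis applied a second time to the shifted word $y_{r+1}w'$, and Pascal's identity to merge the two sums. The only (harmless) difference is cosmetic: you phrase the step as $m\to m+1$ rather than $m-1\to m$, and you spell out the $a=0$ case of the index-raising observation, which in fact already follows directly from the operator formula $\polylog^-_{y_0v'}=\theta_0\iota_1(\polylog^-_{v'})$.
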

    \begin{proof}
        We will show (left)$=$(center) by induction on $m$. It is clear that $\polylog_{y_0}^-\polylog_{w}^-=\polylog_{y_0w}^-$ holds. Let $m>0$ and we assume that the desired equation holds for $m-1$. Then for any $r\geq0$ and $w=y_rw'\in Y^\ast\setminus\{\epsilon\}$,
        \begin{align*}
            \theta_0(\polylog_{y_{m-1}}^-\polylog_{w}^-)&=\polylog_{y_m}^-\polylog_{w}^-+\polylog_{y_{m-1}}\polylog_{y_{r+1}w'}^-\\
            &=\sum_{k=0}^{m-1}(-1)^k\binom{m-1}{k}\theta_0(\polylog_{y_{m-1-k}y_{r+k}w'}^-)=\sum_{k=0}^{m-1}(-1)^k\binom{m-1}{k}\polylog_{y_{m-k}y_{r+k}w'}^-.
        \end{align*}
        Therefore, by the assumption of induction, we obtain
        \begin{align*}
            \polylog_{y_m}^-\polylog_{w}^-&=\sum_{k=0}^{m-1}(-1)^k\binom{m-1}{k}\polylog_{y_{m-k}y_{r+k}w'}^--\polylog_{y_{m-1}}\polylog_{y_{r+1}w'}^-\\
            &=\sum_{k=0}^{m-1}(-1)^k\binom{m-1}{k}\polylog_{y_{m-k}y_{r+k}w'}^--\sum_{k=0}^{m-1}(-1)^k\binom{m-1}{k}\polylog_{y_{m-1-k}y_{r+1+k}w'}^-\\
            &=\polylog_{y_mw}^-+\sum_{k=1}^{m-1}(-1)^k\left(\binom{m-1}{k}+\binom{m-1}{k-1}\right)\polylog_{y_{m-k}y_{r+k}w'}^-+(-1)^m\polylog_{y_0y_{n+m}w'}^-\\
            &=\sum_{k=0}^m(-1)^k\binom{m}{k}\polylog_{y_{m-k}y_{r+k}w'}^-.
        \end{align*}
    \end{proof}
    In particular, when $w\in Y$, the following holds.
    \begin{cor}\label{non-positive polylog shuffle}
        For $m,n\in\N$,
        \begin{align*}
            \polylog_{y_m}^-\polylog_{y_n}^-&=\sum_{k=0}^m(-1)^k\binom{m}{k}\polylog_{y_{m-k}y_{n+k}}^-=\sum_{k=0}^n(-1)^k\binom{n}{k}\polylog_{y_{n-k}y_{m+k}}^-
        \end{align*}
    \end{cor}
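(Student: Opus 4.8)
The plan is to read off both equalities as immediate specializations of Theorem \ref{product between literature and word}. For the first equality I would apply that theorem to the one-letter word $w = y_n \in Y$, i.e.\ take $r = n$ and $w'$ the empty word, so that $y_{r+k}w' = y_{n+k}$; this gives at once
\[
\polylog_{y_m}^-\polylog_{y_n}^- = \sum_{k=0}^m(-1)^k\binom{m}{k}\polylog_{y_{m-k}y_{n+k}}^-.
\]
The only point worth remarking is that the case $w \in Y$ is just the boundary instance ``$w'$ empty'' of the theorem, which is already covered by its proof (the base case $m=0$ there asserts $\polylog_{y_0}^-\polylog_{w}^- = \polylog_{y_0 w}^-$ for an arbitrary word $w$).

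For the second equality I would invoke commutativity: since $\polylog^-_\bullet$ takes values in the commutative ring $\Q[z,(1-z)^{-1}]$, we have $\polylog_{y_m}^-\polylog_{y_n}^- = \polylog_{y_n}^-\polylog_{y_m}^-$, and applying Theorem \ref{product between literature and word} once more with the roles of $m$ and $n$ interchanged (now to the word $w = y_m$) yields
\[
\polylog_{y_n}^-\polylog_{y_m}^- = \sum_{k=0}^n(-1)^k\binom{n}{k}\polylog_{y_{n-k}y_{m+k}}^-.
\]
Chaining the two displays completes the proof.

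Since this is a pure corollary, I do not expect any genuine obstacle: all the substance was already spent on Theorem \ref{product between literature and word}. If a self-contained check of the (superficially nontrivial) identity between the two alternating binomial sums were wanted, the cleanest route is to compare the coefficient of $z^c$ on all three sides as elements of $\Q[[z]]$: the left side gives $\sum_{a+b=c,\,a,b>0} a^m b^n$, the binomial theorem collapses the middle sum to $\sum_{0<d<c} d^n(c-d)^m$ and the right sum to $\sum_{0<d<c} d^m(c-d)^n$, and the substitution $d \mapsto c-d$ identifies all three. I would keep the short deduction from Theorem \ref{product between literature and word} as the actual proof and mention this coefficient computation only as a remark, if at all.
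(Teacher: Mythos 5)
Your proposal is correct and matches the paper exactly: the corollary is obtained by specializing Theorem \ref{product between literature and word} to the one-letter word $w=y_n$ (i.e.\ $r=n$, $w'$ empty), with the second equality following from commutativity of $\Q[z,(1-z)^{-1}]$ and the same theorem applied with $m$ and $n$ interchanged. The paper gives no separate proof beyond the phrase ``in particular, when $w\in Y$,'' so your short deduction is precisely the intended argument; the coefficient-of-$z^c$ check is a fine optional remark but not needed.
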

    In addition, applying this theorem inductively, we can calculate $r$-fold product of $\polylog^-_{y_s}$.
    \begin{cor}\label{n-fold product}
        For $s_1,\ldots,s_n\in\N(n>1)$,
        \begin{equation}
            \prod_{i=1}^n\polylog_{y_{s_i}}^-=\sum_{k_1=0}^{s_1}\sum_{k_2=0}^{s_2}\cdots\sum_{k_{n-1}=0}^{s_{n-1}}\left(\prod_{i=1}^{n-1}(-1)^{k_i}\binom{s_i}{k_i}\right)\polylog_{y_{s_1-k_1}y_{s_2-k_2+k_1}\cdots y_{s_{n-1}-k_{n-1}+k_{n-2}}y_{s_n+k_{n-1}}}^-.
        \end{equation}
    \end{cor}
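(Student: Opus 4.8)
The plan is to prove the identity by induction on $n$, peeling off the leftmost factor $\polylog_{y_{s_1}}^-$ at each stage and feeding the rest into Theorem \ref{product between literature and word}. For the base case $n=2$ I would simply quote the first equality of Corollary \ref{non-positive polylog shuffle}, which is itself the instance $m=s_1$, $r=s_2$, $w'=\epsilon$ (so that $w=y_{s_2}$) of Theorem \ref{product between literature and word}.

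For the inductive step I would assume $n>2$ and that the formula holds for every product of $n-1$ factors, and apply it to $\polylog_{y_{s_2}}^-\cdots\polylog_{y_{s_n}}^-$; after relabelling the summation variables as $k_2,\dots,k_{n-1}$, this expresses the $(n-1)$-fold product as
\[
\sum_{k_2=0}^{s_2}\cdots\sum_{k_{n-1}=0}^{s_{n-1}}\left(\prod_{i=2}^{n-1}(-1)^{k_i}\binom{s_i}{k_i}\right)\polylog_{y_{s_2-k_2}\,y_{s_3-k_3+k_2}\,\cdots\,y_{s_{n-1}-k_{n-1}+k_{n-2}}\,y_{s_n+k_{n-1}}}^-.
\]
The key structural observation is that in each word occurring here the first letter is $y_{s_2-k_2}$, depending on the single index $k_2$ only, so each such word is of the form $y_r w'$ with $r=s_2-k_2$ and $w'$ the word built from the remaining $n-2$ letters. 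Multiplying on the left by $\polylog_{y_{s_1}}^-$, distributing the product over the finite sum, and applying Theorem \ref{product between literature and word} with $m=s_1$ to each summand yields
\[
\polylog_{y_{s_1}}^-\,\polylog_{y_r w'}^-=\sum_{k_1=0}^{s_1}(-1)^{k_1}\binom{s_1}{k_1}\polylog_{y_{s_1-k_1}\,y_{r+k_1}\,w'}^-.
\]
Since $y_{r+k_1}=y_{s_2-k_2+k_1}$, the word on the right is exactly $y_{s_1-k_1}\,y_{s_2-k_2+k_1}\,y_{s_3-k_3+k_2}\,\cdots\,y_{s_{n-1}-k_{n-1}+k_{n-2}}\,y_{s_n+k_{n-1}}$, and the combined coefficient is $(-1)^{k_1}\binom{s_1}{k_1}\prod_{i=2}^{n-1}(-1)^{k_i}\binom{s_i}{k_i}=\prod_{i=1}^{n-1}(-1)^{k_i}\binom{s_i}{k_i}$; since $k_1$ now runs over $0\le k_1\le s_1$ while $k_2,\dots,k_{n-1}$ run over their old ranges, this is precisely the asserted formula for $n$ factors.

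I expect there to be no genuine difficulty beyond Theorem \ref{product between literature and word}; the one thing to get right is the index bookkeeping, namely checking that the single new shift $r\mapsto r+k_1$ introduced by the fresh application of the key formula lands in the correct slot so as to reproduce the uniform pattern $y_{s_j-k_j+k_{j-1}}$ at every interior position. This becomes transparent once one records that the leftmost letter of each intermediate word is controlled by exactly one summation variable, so I anticipate this minor combinatorial verification to be the only obstacle.
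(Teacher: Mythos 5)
Your proposal is correct and follows essentially the same route as the paper: induction on $n$ with base case Corollary \ref{non-positive polylog shuffle}, peeling off the leftmost factor and applying the inductive hypothesis to the remaining $(n-1)$-fold product, then invoking Theorem \ref{product between literature and word} term by term. Your index bookkeeping (identifying $r=s_2-k_2$ so that $y_{r+k_1}=y_{s_2-k_2+k_1}$) is exactly the verification the paper's proof relies on, and is in fact spelled out more carefully than in the paper's displayed computation.
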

    \begin{proof}
        We use an induction on $n$. When $n=2$, Corollary \ref{non-positive polylog shuffle} imply the assertion. Now, to prove the case in $n>2$, we assume that the assertion holds for $n-1$. Then, we have
        \begin{align*}
            \prod_{i=1}^n\polylog_{y_{s_i}}^-&=\polylog_{y_{s_1}}^-\left(\prod_{i=2}^n\polylog_{y_{s_i}}^-\right)\\
            &=\sum_{k_2=0}^{s_2}\cdots\sum_{k_{n-1}=0}^{s_{n-1}}\left(\prod_{i=2}^{n-1}(-1)^{k_i}\binom{s_i}{k_i}\right)\polylog_{y_{s_1}}^-\polylog_{y_{s_2-k_2}y_{s_3-k_3+k_2}\cdots y_{s_n+k_{n-1}}}^-.
        \end{align*}
        Now, from Theorem \ref{product between literature and word}, we see
        \begin{equation}
            \polylog_{y_{s_1}}^-\polylog_{y_{s_2-k_2}y_{s_3-k_3+k_2}\cdots y_{s_n+k_{n-1}}}^-=\sum_{k_1=0}^{s_1}(-1)^{k_1}\binom{s_1}{k_1}\polylog_{y_{s_1-k_1}y_{s_2-k_2+k_1}\cdots y_{s_n-k_{n-1}}}^-.
        \end{equation}
        So we obtain the desired result.
    \end{proof}
    Now, let us prove Theorem \ref{main4} of Introduction:
    \begin{theo}\label{Magnus poly and n-fold prod}
        For $\mathbf{k}=(k_1,\ldots,k_r;k_\infty)\in\N^\infty\times\N$
        \begin{equation}
            \polylog_{k_1}^-\cdots\polylog_{k_r}^-\polylog_{k_\infty}^-=\polylog^-_{\pi(M^{(\mathbf{k})}x_1)}.
        \end{equation}
    \end{theo}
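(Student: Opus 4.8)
The plan is to read off both sides of the identity from explicit expansions that are already available and to match them term by term. For the left-hand side we use Corollary~\ref{n-fold product}; for the right-hand side we use the monomial expansion of the Magnus polynomial (Corollary~\ref{expantion of Magnus poly}) together with the explicit action of $\pi$ on words and the $\C$-linearity of $\polylog^-_\bullet$. First dispose of the degenerate case $r=0$: then $\mathbf{k}=(;k_\infty)$, so $M^{(\mathbf{k})}x_1=x_0^{k_\infty}x_1$ and $\pi(M^{(\mathbf{k})}x_1)=y_{k_\infty}$, and both sides equal $\polylog^-_{k_\infty}$ by definition. So assume $r\ge 1$; then the left-hand side is the $n$-fold product $\prod_{i=1}^n\polylog^-_{y_{s_i}}$ with $n=r+1\ge 2$, $s_i=k_i$ for $1\le i\le r$ and $s_{r+1}=k_\infty$, and Corollary~\ref{n-fold product} (with its summation variables renamed to avoid clashing with the given $k_i$) gives
\[
\polylog^-_{k_1}\cdots\polylog^-_{k_r}\polylog^-_{k_\infty}=\sum_{j_1=0}^{k_1}\cdots\sum_{j_r=0}^{k_r}\left(\prod_{i=1}^{r}(-1)^{j_i}\binom{k_i}{j_i}\right)\polylog^-_{y_{k_1-j_1}y_{k_2-j_2+j_1}\cdots y_{k_r-j_r+j_{r-1}}y_{k_\infty+j_r}}.
\]

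On the other side, Corollary~\ref{expantion of Magnus poly} expands $M^{(\mathbf{k})}$ as a $\Z$-linear combination over $0\le i_j\le k_j$ $(1\le j\le r)$ of the monomials $x_0^{k_1-i_1}x_1x_0^{k_2-i_2+i_1}x_1\cdots x_0^{k_r-i_r+i_{r-1}}x_1x_0^{k_\infty+i_r}$, with coefficient $\prod_{j=1}^r(-1)^{i_j}\binom{k_j}{i_j}$. Multiplying on the right by $x_1$ keeps the expression in $\C\langle X\rangle x_1$, and since $\pi$ sends $x_0^{a_1}x_1\cdots x_0^{a_{r+1}}x_1$ to $y_{a_1}\cdots y_{a_{r+1}}$, applying $\pi$ monomialwise and then the $\C$-linear map $\polylog^-_\bullet$ yields
\[
\polylog^-_{\pi(M^{(\mathbf{k})}x_1)}=\sum_{i_1=0}^{k_1}\cdots\sum_{i_r=0}^{k_r}\left(\prod_{j=1}^r(-1)^{i_j}\binom{k_j}{i_j}\right)\polylog^-_{y_{k_1-i_1}y_{k_2-i_2+i_1}\cdots y_{k_r-i_r+i_{r-1}}y_{k_\infty+i_r}}.
\]
Identifying the dummy $i_j$ with $j_j$, this is term-by-term identical to the previous display: the index ranges agree, the coefficients $\prod_j(-1)^{i_j}\binom{k_j}{i_j}$ agree, and so do the words, the interior blocks carrying the one-sided shift $-\,i_j+i_{j-1}$, the first block $y_{k_1-i_1}$ carrying no incoming shift, and the last block $y_{k_\infty+i_r}$ absorbing the final shift. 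This is exactly the asserted identity.

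The argument is essentially a bookkeeping exercise, so the only point requiring care is lining up the one-sided shift recursion produced by iterating Theorem~\ref{product between literature and word} (the shape of Corollary~\ref{n-fold product}) with the exponents coming out of the iterated brackets $x_1^{(k_j)}$, together with the treatment of the two boundary blocks and of the length-one and empty-word conventions for indices in $\N^\infty\times\N$. A more conceptual alternative would be induction on $r$: writing $M^{(\mathbf{k})}x_1=x_1^{(k_1)}\cdot\bigl(M^{(\mathbf{k}')}x_1\bigr)$ with $\mathbf{k}'=(k_2,\ldots,k_r;k_\infty)$ and using $x_1^{(k_1)}=\sum_{k}(-1)^k\binom{k_1}{k}x_0^{k_1-k}x_1x_0^{k}$ together with the multiplicativity of $\pi$, one sees that $\pi(M^{(\mathbf{k})}x_1)$ arises from $\pi(M^{(\mathbf{k}')}x_1)$ by exactly the substitution in Theorem~\ref{product between literature and word}, whence $\polylog^-_{\pi(M^{(\mathbf{k})}x_1)}=\polylog^-_{y_{k_1}}\cdot\polylog^-_{\pi(M^{(\mathbf{k}')}x_1)}$ and the inductive hypothesis finishes the proof. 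I would take the direct comparison as the main argument, since both expansions it needs are already in place.
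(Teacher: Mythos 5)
Your proposal is correct and follows exactly the paper's own route: the paper's proof consists of the single instruction to compare Corollary~\ref{expantion of Magnus poly} with Corollary~\ref{n-fold product}, and your term-by-term matching (plus the $r=0$ check) is just that comparison carried out explicitly. The alternative induction you sketch at the end is a valid variant, but the main argument is the same as the paper's.
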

    \begin{proof}
        Compare Corollary \ref{expantion of Magnus poly} with Corollary \ref{n-fold product} to complete this proof.
    \end{proof}
    Observe in the above that the coefficients appearing in Corollary \ref{expantion of Magnus poly} and in Corollary \ref{n-fold product} coincide with each others. In the next section, we will discuss a combinatorial framework due to \cite{Na23} that explains this coincidence from the viewpoint of a duality between multiple binomial coefficients.
\section{Reformulation of main theorems}
    Let $\Q\langle Y\rangle$ be a vector space freely generated by $Y^\ast$ over $\Q$. Then, we define the following $\Q$-linear map (cf. \cite{D17a}):
    \begin{equation}
        \polylog_\bullet^-:\Q\langle Y\rangle\longrightarrow\Q\qty[z,\frac{1}{1-z}]\ ;\ w\longmapsto\polylog_w^-.
    \end{equation}
    \begin{defi}
        For $\mathbf{k}\in\N^\infty(\times\N)$, there is a unique $r\in\N$ such that $\mathbf{k}\in\N^r\times\N\subset\N^\infty\times\N$ because of the definition of $\N^\infty$. Then we define the \textup{depth} of an index, denoted by $\depth\mathbf{k}$, as the number $r$, and we define $|\bf{k}|\in\Z_{\geq 0}$ as a sum of each coefficients of $\mathbf{k}$.
    \end{defi}
    Then for simplicity, we introduce the following convenient notations.
    \begin{defi}[Array binomial coefficient (\cite{Na23})]
        Let $\mathbf{s}:=(s_1,\ldots,s_{r_1};s_\infty),\mathbf{k}:=(k_1,\ldots,k_{r_2};k_\infty)\in\N^\infty\times\N\,(r_1,r_2>1)$. Then we define the followings.\par
        (1)An \textup{array binomial coefficient} for $\mathbf{s,k}$ denote as
        \begin{equation}
            \binom{\bf{s}}{\bf{k}}:=\binom{s_1}{k_1}\binom{s_1+s_2-k_1}{k_2}\cdots\binom{s_1+\cdots+s_r-k_1-\cdots-k_{r-1}}{k_r}.
        \end{equation}
        Here $r=\depth\bf{s}$, and we define the value as $0$ if even one of the following conditions does not hold.
        \begin{enumerate}[(a)]
            \item 
            $\depth\mathbf{s}=\depth\mathbf{k}(=r),$
            \item 
            For $n=1,\ldots,r$, $\sum_{i=1}^n(s_i-k_i)\geq0,$
            \item 
            $|\bf{s}|=|\bf{k}|.$
        \end{enumerate}\par
        (2)If $r_1=r_2=:d$, then a \textup{dual array binomial coefficient} for $\mathbf{s,k}$ denote as
        \begin{equation}
            \duarrbinom{\mathbf{s}}{\mathbf{k}}:=(-1)^{\sum_{i=1}^{d}(d-i+1)(s_i-k_i)}\binom{s_1}{s_1-k_1}\binom{s_2}{s_1+s_2-k_1-k_2}\cdots\binom{s_d}{\sum_{i=1}^d(s_i-k_i)}.
        \end{equation}
        Here we define the value as $0$ if one of the above conditions (a)-(c) does not hold.
    \end{defi}
    Indeed, it is known that these satisfy a certain duality.
    \begin{prop}\textup{(\cite[Remark 2.8]{Na23})}
        For $\mathbf{s,k}\in\N^\infty\times
        \N$, we have:
        \begin{equation}
            \sum_{\mathbf{u}\in\N^\infty\times\N}\binom{\mathbf{s}}{\mathbf{u}}\duarrbinom{\mathbf{u}}{\mathbf{k}}=\delta^{\mathbf{k}}_{\mathbf{s}}.
        \end{equation}
        Here, $\delta^{\mathbf{k}}_{\mathbf{s}}$ is Kronecker symbol, i.e. designating $0$ or $1$ according to whether $\mathbf{s}\neq\mathbf{k}$ or $\mathbf{s}=\mathbf{k}$ respectively.
    \end{prop}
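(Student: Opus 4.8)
The plan is to recognize the two families $\binom{\bullet}{\bullet}$ and $\duarrbinom{\bullet}{\bullet}$ as the entries of the two mutually inverse change-of-basis matrices, inside the free algebra $\C\langle X\rangle$, between the ``monomial basis'' and the Magnus basis of each graded component; the duality is then pure linear algebra once both transition formulas are in hand.

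First I would dispose of the degenerate cases. If $\depth\mathbf{s}\neq\depth\mathbf{k}$ or $|\mathbf{s}|\neq|\mathbf{k}|$, then for every $\mathbf{u}$ at least one of $\binom{\mathbf{s}}{\mathbf{u}}$, $\duarrbinom{\mathbf{u}}{\mathbf{k}}$ vanishes, so the left-hand side is $0$, and so is $\delta^{\mathbf{k}}_{\mathbf{s}}$ since $\mathbf{s}\neq\mathbf{k}$. Hence we may fix $d:=\depth\mathbf{s}=\depth\mathbf{k}$ and $N:=|\mathbf{s}|=|\mathbf{k}|$; the sum is then finite, running over $V:=\{\mathbf{u}\in\N^{d}\times\N : |\mathbf{u}|=N\}$, and the case $d=0$ is immediate because there $\binom{(;a)}{(;b)}=\duarrbinom{(;a)}{(;b)}=\delta_{a,b}$. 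For $\mathbf{u}=(u_1,\dots,u_d;u_\infty)\in V$ put $m_{\mathbf{u}}:=x_0^{u_1}x_1x_0^{u_2}x_1\cdots x_0^{u_d}x_1x_0^{u_\infty}\in\C\langle X\rangle$; these are pairwise distinct monomials, hence linearly independent, and by homogeneity in $X$-degree each $M^{(\mathbf{k})}$ with $\mathbf{k}\in V$ is a $\C$-combination of them.

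The first transition formula is essentially Corollary \ref{expantion of Magnus poly}: after the substitution $i_j=\sum_{l\le j}(k_l-u_l)$ (so that the sign $(-1)^{\sum_j i_j}$ becomes $(-1)^{\sum_l(d-l+1)(k_l-u_l)}$), the corollary reads, in the notation of the dual array binomial,
\[
  M^{(\mathbf{k})}=\sum_{\mathbf{u}\in V}\duarrbinom{\mathbf{k}}{\mathbf{u}}\,m_{\mathbf{u}}\qquad(\mathbf{k}\in V).
\]
The second transition formula, $m_{\mathbf{s}}=\sum_{\mathbf{k}\in V}\binom{\mathbf{s}}{\mathbf{k}}M^{(\mathbf{k})}$, I would prove by induction on $d=\depth\mathbf{s}$. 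The one non-routine input is the identity
\[
  x_0^{m}x_1=\sum_{k=0}^{m}\binom{m}{k}x_1^{(k)}x_0^{m-k},
\]
obtained by the same Vandermonde manipulation that yields $x_1^{(n)}=\sum_k(-1)^k\binom{n}{k}x_0^{n-k}x_1x_0^k$. Granting it: write $m_{\mathbf{s}}=\bigl(x_0^{s_1}x_1\bigr)\,m_{(s_2,\dots;s_\infty)}$ (the tail being the monomial obtained by deleting the initial $x_0^{s_1}x_1$), substitute for $x_0^{s_1}x_1$, and merge the trailing $x_0^{s_1-k_1}$ into the leading $x_0$-power of the tail; the inductive hypothesis applied to the depth-$(d{-}1)$ monomial $m_{(s_1+s_2-k_1,\,s_3,\dots,s_r;\,s_\infty)}$ then gives
\[
  m_{\mathbf{s}}=\sum_{k_1=0}^{s_1}\binom{s_1}{k_1}\,x_1^{(k_1)}\sum_{\mathbf{k}'}\binom{(s_1+s_2-k_1,\,s_3,\dots;\,s_\infty)}{\mathbf{k}'}M^{(\mathbf{k}')}.
\]
Since $x_1^{(k_1)}M^{(\mathbf{k}')}$ is again a Magnus monomial and $\binom{s_1}{k_1}\binom{(s_1+s_2-k_1,s_3,\dots;s_\infty)}{(k_2,\dots;k_\infty)}=\binom{(s_1,s_2,s_3,\dots;s_\infty)}{(k_1,k_2,\dots;k_\infty)}$ by the recursion built into the definition of the array binomial, this is exactly $\sum_{\mathbf{k}}\binom{\mathbf{s}}{\mathbf{k}}M^{(\mathbf{k})}$. (Alternatively, the second transition formula is the inverse Magnus expansion, and can also be extracted from Proposition \ref{polylog and array binomial} together with Theorem \ref{Magnus poly and n-fold prod}; but that route must separately exclude a correction term lying in $\Ker\polylog^-_\bullet$, which the free-algebra computation above avoids.)

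Substituting the first transition formula into the second and comparing coefficients of the (linearly independent) $m_{\mathbf{u}}$ gives
\[
  \sum_{\mathbf{k}\in V}\binom{\mathbf{s}}{\mathbf{k}}\duarrbinom{\mathbf{k}}{\mathbf{u}}=\delta^{\mathbf{u}}_{\mathbf{s}},
\]
which is the asserted identity after relabeling the summation variable. I expect the only real work to be the inductive proof of the second transition formula: establishing $x_0^{m}x_1=\sum_k\binom{m}{k}x_1^{(k)}x_0^{m-k}$ and then keeping the multi-index bookkeeping straight — matching the left-peeling of $m_{\mathbf{s}}$ with the array-binomial recursion and handling the terminal slot $k_\infty$ correctly — while also reconciling the displayed definition of $\duarrbinom{\bullet}{\bullet}$ (which as printed contains minor slips, e.g.\ an undefined $t_i$ and $\sum(k_1-t_i)$ for $\sum(k_i-t_i)$) with Corollary \ref{expantion of Magnus poly}.
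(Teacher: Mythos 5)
Your proof is correct, but it takes a genuinely different route from the paper, for the simple reason that the paper supplies no argument at all here: the proposition is quoted from \cite[Remark 2.8]{Na23}, just as the two transition formulas you establish appear (also without proof) as Proposition \ref{Magnus expression}, with your monomials $m_{\mathbf{u}}$ being the paper's $w_{\mathbf{u}}$. What you have done is reconstruct that cited material inside $\C\langle X\rangle$ and then finish by linear algebra. The expansion $M^{(\mathbf{k})}=\sum_{\mathbf{u}}\duarrbinom{\mathbf{k}}{\mathbf{u}}m_{\mathbf{u}}$ is indeed exactly Corollary \ref{expantion of Magnus poly} after the substitution $i_j=\sum_{l\le j}(k_l-u_l)$, and your sign bookkeeping $\sum_j i_j=\sum_l(d-l+1)(k_l-u_l)$ is right. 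The inverse expansion $m_{\mathbf{s}}=\sum_{\mathbf{k}}\binom{\mathbf{s}}{\mathbf{k}}M^{(\mathbf{k})}$ you derive by induction on depth from the identity
\begin{equation*}
x_0^{m}x_1=\sum_{k=0}^{m}\binom{m}{k}x_1^{(k)}x_0^{m-k},
\end{equation*}
which the paper never states but which is correct: expanding $x_1^{(k)}$ and using $\binom{m}{m-i+t}\binom{m-i+t}{t}=\binom{m}{i}\binom{i}{t}$, the coefficient of $x_0^{m-i}x_1x_0^{i}$ collapses to $\binom{m}{i}\sum_t(-1)^t\binom{i}{t}=\delta_{i,0}$; the recursion $\binom{\mathbf{s}}{\mathbf{k}}=\binom{s_1}{k_1}\binom{(s_1+s_2-k_1,s_3,\ldots;s_\infty)}{(k_2,\ldots;k_\infty)}$ and the prepending $x_1^{(k_1)}M^{(\mathbf{k}')}=M^{(k_1,\mathbf{k}')}$ then close the induction. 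Comparing coefficients of the linearly independent monomials $m_{\mathbf{u}}$ within a fixed finite depth-and-weight component, with the off-diagonal depth/weight cases killed by the vanishing conventions, yields the duality. Your route buys a self-contained proof at the cost of redoing Nakamura's computation; the paper buys brevity by citation. Your observation that the printed definition of $\duarrbinom{\mathbf{u}}{\mathbf{k}}$ contains slips ($t_i$ undefined, $k_1-t_i$ for $k_i-t_i$, and no explicit weight constraint pinning down $k_\infty$) is accurate, and resolving it by identifying the dual coefficients with those of Corollary \ref{expantion of Magnus poly} is exactly the right repair.
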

    For example of array binomial coefficients, if $w=y_{s_1}\cdots y_{s_r}$, then for $\mathbf{s}=(s_1,\ldots,s_{r-1};s_r)\in\N^\infty\times\N$ we can rewrite the equation of Proposition \ref{polylog and array binomial} more simply:
    \begin{equation}\label{eq.expansion of polylog}
        \polylog_{w}^-=\sum_{\bf{k}\in\N^\infty\times\N}\binom{\bf{s}}{\bf{k}}\polylog^-_{k_1}\cdots\polylog_{k_{r-1}}^-\polylog_{k_\infty}^-.
    \end{equation}
    Here $k_\infty=s_1+\cdots+s_r-k_1-\cdots-k_{r-1}$. This enables us to reprove Proposition \ref{polylog and array binomial} (Theorem \ref{main1}):
    \begin{cor}\label{Magnus poly and non-positive multi-index polylog}
        Let $w=y_{s_1}\cdots y_{s_r}\in Y^\ast$ and $\mathbf{s}:=(s_1,\ldots,s_{r-1};s_r)\in\N^\infty\times\N$. Then we obtain:
        \begin{equation}
            \polylog_w^-=\sum_{\bf{k}\in\N^\infty\times\N}\binom{\bf{s}}{\bf{k}}\polylog^-_{\pi(M^{(\mathbf{k})}x_1)}.
        \end{equation}
    \end{cor}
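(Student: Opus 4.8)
The plan is simply to compose the two facts already established. The first is the array-binomial form (\ref{eq.expansion of polylog}) of Proposition \ref{polylog and array binomial},
\begin{equation*}
    \polylog_w^-=\sum_{\mathbf{k}\in\N^\infty\times\N}\binom{\mathbf{s}}{\mathbf{k}}\,\polylog^-_{k_1}\cdots\polylog_{k_{r-1}}^-\polylog_{k_\infty}^-,
\end{equation*}
where, by the vanishing conventions built into $\binom{\mathbf{s}}{\mathbf{k}}$, the only indices contributing are those of the shape $\mathbf{k}=(k_1,\ldots,k_{r-1};k_\infty)$ with $\depth\mathbf{k}=r-1$ and $k_\infty=(s_1+\cdots+s_r)-(k_1+\cdots+k_{r-1})$. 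The second is Theorem \ref{Magnus poly and n-fold prod}, which evaluates each such product of mono-indexed non-positive MPLs as a single non-positive MPL indexed by a Magnus polynomial.

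The argument is then a termwise substitution. First I would fix a contributing index $\mathbf{k}=(k_1,\ldots,k_{r-1};k_\infty)$ and apply Theorem \ref{Magnus poly and n-fold prod} with its depth parameter taken to be $r-1$, obtaining
\begin{equation*}
    \polylog^-_{k_1}\cdots\polylog_{k_{r-1}}^-\polylog_{k_\infty}^-=\polylog^-_{\pi(M^{(\mathbf{k})}x_1)},\qquad M^{(\mathbf{k})}=x_1^{(k_1)}\cdots x_1^{(k_{r-1})}x_0^{k_\infty}.
\end{equation*}
Inserting this into the display above, term by term, produces exactly $\polylog_w^-=\sum_{\mathbf{k}}\binom{\mathbf{s}}{\mathbf{k}}\polylog^-_{\pi(M^{(\mathbf{k})}x_1)}$. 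The degenerate case $r=1$ is checked directly: then $\mathbf{s}=(;s_1)$, the only contributing index is $\mathbf{k}=\mathbf{s}$ with $\binom{\mathbf{s}}{\mathbf{s}}=1$, and $\pi(M^{(;s_1)}x_1)=\pi(x_0^{s_1}x_1)=y_{s_1}$, so the formula reduces to $\polylog^-_w=\polylog^-_{y_{s_1}}$.

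Since both ingredients are already proved, there is no real obstacle; the only thing requiring attention is the index bookkeeping, i.e.\ aligning the depth-$(r-1)$ multi-index on the right of (\ref{eq.expansion of polylog}) with the depth slot of Theorem \ref{Magnus poly and n-fold prod}, and observing that the weight constraint $|\mathbf{k}|=|\mathbf{s}|$ forced by $\binom{\mathbf{s}}{\mathbf{k}}\neq 0$ is precisely what makes $M^{(\mathbf{k})}x_1$ correspond under $p^{-1}$ to words of the same weight as $w$. Equivalently, one could read the Corollary as the combination of Corollary \ref{n-fold product} with the duality of \cite[Remark 2.8]{Na23}, but the direct substitution above is the shortest route.
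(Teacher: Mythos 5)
Your proof is correct and is essentially the paper's own argument: the paper also deduces the Corollary by combining the expansion (\ref{eq.expansion of polylog}) with Theorem \ref{Magnus poly and n-fold prod} via termwise substitution. The extra bookkeeping you supply (depth alignment and the $r=1$ case) is harmless and only makes explicit what the paper leaves implicit.
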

    \begin{proof}
        It is clear from (\ref{eq.expansion of polylog}) and Theorem \ref{Magnus poly and n-fold prod}.
    \end{proof}
    Now, for $\mathbf{s}=(s_1,\ldots,s_{r-1};s_r)\in\N^\infty\times\N$, set $w_{\mathbf{s}}:=x_0^{s_1}x_1\cdots x_0^{s_{r-1}}x_1x_0^{s_r}$. H. Nakamura derived the equations between words $w_\mathbf{s}$ and Magnus polynomials like a kind of the M\"obius inversion formula.
    \begin{prop}[\cite{Na23}]\label{Magnus expression}
        \begin{equation}
            w_{\mathbf{s}}=\sum_{\mathbf{k}\in\N^\infty\times\N}\binom{\bf{s}}{\bf{k}}M^{(\mathbf{k})},\qquad M^{(\mathbf{k})}=\sum_{\mathbf{s}\in\N^\infty\times\N}\duarrbinom{\mathbf{k}}{\mathbf{s
            }}w_\mathbf{s}.
        \end{equation}
    \end{prop}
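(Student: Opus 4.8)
The plan is to deduce both equalities from two ingredients already available above: the expansion $x_1^{(n)}=\sum_{k=0}^{n}(-1)^{k}\binom{n}{k}x_0^{n-k}x_1x_0^{k}$ recalled from \cite{Na23},\cite{Ma37}, and (for a shortcut on the second equality) the duality $\sum_{\mathbf{u}}\binom{\mathbf{s}}{\mathbf{u}}\duarrbinom{\mathbf{u}}{\mathbf{k}}=\delta^{\mathbf{k}}_{\mathbf{s}}$ from \cite[Remark 2.8]{Na23}. First I would record the ``inverse'' of the expansion of $x_1^{(n)}$:
\begin{equation*}
    x_0^{a}x_1=\sum_{j=0}^{a}\binom{a}{j}x_1^{(j)}x_0^{a-j}\qquad(a\in\N),
\end{equation*}
which is a one-line induction on $a$: multiply on the left by $x_0$, use $x_0x_1^{(j)}=x_1^{(j)}x_0+x_1^{(j+1)}$, reindex the second sum, and collapse with Pascal's rule.

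For the first equality $w_{\mathbf{s}}=\sum_{\mathbf{k}}\binom{\mathbf{s}}{\mathbf{k}}M^{(\mathbf{k})}$ I would induct on $\depth\mathbf{s}$. When $\depth\mathbf{s}=1$, say $\mathbf{s}=(s_1;s_2)$, the auxiliary identity gives $w_{\mathbf{s}}=x_0^{s_1}x_1x_0^{s_2}=\sum_{k_1}\binom{s_1}{k_1}x_1^{(k_1)}x_0^{s_1+s_2-k_1}=\sum_{k_1}\binom{s_1}{k_1}M^{(k_1;\,s_1+s_2-k_1)}$, which is the claim. For larger depth I would peel off the leftmost block $x_0^{s_1}x_1$ of $w_{\mathbf{s}}$, absorb the leftover $x_0^{s_1-k_1}$ into the next power of $x_0$, and obtain $w_{\mathbf{s}}=\sum_{k_1=0}^{s_1}\binom{s_1}{k_1}x_1^{(k_1)}w_{\mathbf{s}'}$ with $\mathbf{s}'=(s_1-k_1+s_2,s_3,\dots,s_{r-1};s_r)$ of depth one less. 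The inductive hypothesis applied to $w_{\mathbf{s}'}$, the observation that $x_1^{(k_1)}M^{(\mathbf{k}')}$ is exactly the Magnus polynomial obtained by prepending $k_1$ to $\mathbf{k}'$, and the recursion $\binom{s_1}{k_1}\binom{\mathbf{s}'}{\mathbf{k}'}=\binom{\mathbf{s}}{(k_1,\mathbf{k}')}$ built into the definition of the array binomial coefficient then finish it, the indices $(k_1,\mathbf{k}')$ outside the summation ranges contributing nothing because $\binom{\mathbf{s}}{\mathbf{k}}$ is declared to be $0$ unless the partial-sum inequalities and the weight equality hold (these being precisely the constraints produced by the peeling). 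Applying $\polylog^-_{\pi((-)x_1)}$ to both sides recovers the word-level content of (\ref{eq.expansion of polylog}).

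For the second equality $M^{(\mathbf{k})}=\sum_{\mathbf{s}}\duarrbinom{\mathbf{k}}{\mathbf{s}}w_{\mathbf{s}}$ I would read off Corollary \ref{expantion of Magnus poly}, which already writes $M^{(\mathbf{k})}$ as a sum of words $w_{\mathbf{s}}$ with $\mathbf{s}=(k_1-i_1, k_2-i_2+i_1, \dots, k_n-i_n+i_{n-1}; k_\infty+i_n)$ and coefficient $\prod_{j}(-1)^{i_j}\binom{k_j}{i_j}$. Putting $t_\ell:=k_\ell-i_\ell+i_{\ell-1}$ with $i_0:=0$ yields the telescoping relation $i_\ell=\sum_{j=1}^{\ell}(k_j-t_j)$, whence $\sum_{j}i_j=\sum_{\ell}(n-\ell+1)(k_\ell-t_\ell)$ and $\binom{k_j}{i_j}=\binom{k_j}{\sum_{\ell\le j}(k_\ell-t_\ell)}$; comparing with the definition shows this coefficient is exactly $\duarrbinom{\mathbf{k}}{\mathbf{s}}$. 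Alternatively, since for fixed depth and fixed weight the arrays $\bigl(\binom{\mathbf{s}}{\mathbf{u}}\bigr)$ and $\bigl(\duarrbinom{\mathbf{u}}{\mathbf{k}}\bigr)$ are finite and mutually inverse by \cite[Remark 2.8]{Na23}, the second equality follows formally from the first.

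The proof carries no analytic or structural difficulty: it is entirely index bookkeeping. The point that needs the most care is keeping straight the convention for the ``$;$''-separated last coordinate, so that $\depth\mathbf{k}=\depth\mathbf{s}$ and $|\mathbf{k}|=|\mathbf{s}|$ are forced on every nonzero term, together with the sign and telescoping computation in the second equality. Making the ranges of summation in the inductive peeling match the vanishing conventions of $\binom{\mathbf{s}}{\mathbf{k}}$ is what lets the first equality close cleanly, and is the step I would watch most closely.
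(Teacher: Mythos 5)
Your proof is correct, but it is a genuinely different route from the paper's: the paper proves this proposition purely by citation, deducing the first identity from \cite[Corollary 2.9]{Na23} (with $u=w_{\mathbf{s}}$) and the second from \cite[Remark 2.8]{Na23}, whereas you reprove both statements from scratch inside the paper's own toolkit. Your first half rests on the inversion $x_0^{a}x_1=\sum_{j=0}^{a}\binom{a}{j}x_1^{(j)}x_0^{a-j}$ (a clean Pascal-rule induction dual to the lemma $x_1^{(n)}=\sum_k(-1)^k\binom{n}{k}x_0^{n-k}x_1x_0^k$ already in Section~3) together with a peeling induction on $\depth\mathbf{s}$; the recursion $\binom{\mathbf{s}}{\mathbf{k}}=\binom{s_1}{k_1}\binom{\mathbf{s}'}{\mathbf{k}'}$ and the identification $x_1^{(k_1)}M^{(\mathbf{k}')}=M^{(k_1,\mathbf{k}')}$ do close the argument, with the vanishing conventions on $\binom{\mathbf{s}}{\mathbf{k}}$ absorbing the out-of-range indices as you say. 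Your second half is exactly a re-reading of Corollary \ref{expantion of Magnus poly}: the substitution $t_\ell=k_\ell-i_\ell+i_{\ell-1}$, $i_0=0$, telescopes to $i_\ell=\sum_{j\le\ell}(k_j-t_j)$ and $\sum_j i_j=\sum_\ell(n-\ell+1)(k_\ell-t_\ell)$, which matches the sign and the binomials in the definition of $\duarrbinom{\mathbf{k}}{\mathbf{s}}$ (reading the paper's definition with its evident typos corrected, e.g.\ $\binom{k_d}{\sum_i(k_i-t_i)}$ in the last factor); your fallback via the finite-matrix inversion of \cite[Remark 2.8]{Na23} is also sound. What your approach buys is self-containedness — the proposition no longer depends on the external reference, and the two halves reuse lemmas the paper has already proved — at the cost of the index bookkeeping you flag; the paper's citation is shorter but leaves the reader to unwind Nakamura's conventions. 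One cosmetic remark: it would be cleaner to include the depth-$0$ base case $w_{(;s_\infty)}=x_0^{s_\infty}=M^{(;s_\infty)}$ explicitly, though starting the induction at depth $1$ as you do is equally valid.
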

    \begin{proof}
        By $u=w_{\mathbf{s}}$ in \cite[Corollary 2.9]{Na23}, we get the first equation. The second equation is shown in \cite[Remark 2.8]{Na23}.
    \end{proof}
    \begin{proof}
        (Another proof of Corollary \ref{Magnus poly and non-positive multi-index polylog})\par
        On the equation in the assertion of Proposition \ref{Magnus expression}, concatenate $x_1$ at last in the word and take the image by $\pi$ to have $\pi(w_\mathbf{s}x_1)=y_{s_1}\cdots y_{s_r}(=:w)$ on the left hand side. Now, $\pi$ is a $\C$-isomorphism, so
        \begin{equation}
            w=\sum_{\mathbf{k}\in\N^\infty\times\N}\binom{\mathbf{s}}{\mathbf{k}}\pi(M^{(\mathbf{k})}x_1).
        \end{equation}
        Hence we obtain the equation by applying a $\C$-linear map $\polylog_\bullet^-$ to both side.
    \end{proof}
\section{Some applications and problems}
    Recall that the composed map 
    \begin{equation}
        \polylog^-_\bullet\circ\pi:\C\langle X\rangle x_1\overset{\pi}{\longrightarrow}\C\langle Y\rangle\overset{\polylog^-_\bullet}{\longrightarrow}\C\qty[z,\frac{1}{1-z}]
    \end{equation}
    sends the Magnus polynomial $M^{(k_1,\ldots,k_d;k_\infty)}x_1$ to the product $\polylog^-_{k_1}(z)\cdots\polylog^-_{k_d}(z)\polylog^-_{k_\infty}(z)$. For any $\mathbf{k}=(k_1,\ldots,k_{r-1};k_r)$ and any $r$-permutation $\sigma\in\mathfrak{S}_r$, we define $\sigma(\mathbf{k}):=(k_{\sigma(1)},\ldots,k_{\sigma(r-1)};k_{\sigma(r)})$. 
    By virtue of Theorem \ref{Magnus poly and n-fold prod}, the permutations of indices of Magnus polynomials leave the corresponding $\polylog^-_\bullet$ invariant. In particular, we have the following theorem:
    \begin{theo}[Theorem \ref{main5} of Introduction]\label{Magnus and symmetric group}
        If $r>0,\sigma\in\mathfrak{S}_r$, then $\pi((M^{(\mathbf{k})}-M^{(\sigma(\mathbf{k}))})x_1)\in\Ker\polylog^-_\bullet$ holds.
    \end{theo}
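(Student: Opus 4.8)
The plan is to deduce the statement directly from Theorem \ref{Magnus poly and n-fold prod}, using only the fact that the target ring $\C[z,(1-z)^{-1}]$ is commutative. First I would apply Theorem \ref{Magnus poly and n-fold prod} to the index $\mathbf{k}=(k_1,\ldots,k_{r-1};k_r)$ to obtain
\[
\polylog^-_\bullet\bigl(\pi(M^{(\mathbf{k})}x_1)\bigr)=\polylog^-_{\pi(M^{(\mathbf{k})}x_1)}=\polylog^-_{k_1}\cdots\polylog^-_{k_{r-1}}\polylog^-_{k_r},
\]
a product of $r$ depth-one rational functions in $\C[z,(1-z)^{-1}]$. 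Applying the same theorem to the permuted index $\sigma(\mathbf{k})=(k_{\sigma(1)},\ldots,k_{\sigma(r-1)};k_{\sigma(r)})$ gives
\[
\polylog^-_\bullet\bigl(\pi(M^{(\sigma(\mathbf{k}))}x_1)\bigr)=\polylog^-_{k_{\sigma(1)}}\cdots\polylog^-_{k_{\sigma(r-1)}}\polylog^-_{k_{\sigma(r)}}.
\]

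Next I would observe that the two right-hand sides differ only by a reordering of their factors: the multiset $\{k_{\sigma(1)},\ldots,k_{\sigma(r)}\}$ coincides with $\{k_1,\ldots,k_r\}$, and multiplication in $\C[z,(1-z)^{-1}]$ is commutative. Hence the two products are equal, so
\[
\polylog^-_\bullet\bigl(\pi(M^{(\mathbf{k})}x_1)\bigr)=\polylog^-_\bullet\bigl(\pi(M^{(\sigma(\mathbf{k}))}x_1)\bigr).
\]
Since $\pi$ and $\polylog^-_\bullet$ are both $\C$-linear, the difference of the two sides equals $\polylog^-_\bullet\bigl(\pi((M^{(\mathbf{k})}-M^{(\sigma(\mathbf{k}))})x_1)\bigr)$, which therefore vanishes; this is exactly the assertion. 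The case $r=1$ is vacuous, since $\mathfrak{S}_1$ is trivial.

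There is essentially no obstacle in the argument itself: the whole weight rests on Theorem \ref{Magnus poly and n-fold prod} (and, behind it, on Theorem \ref{product between literature and word}). The point worth emphasising is that this symmetry is genuinely non-trivial at the level of non-commutative polynomials: in general $M^{(\mathbf{k})}\neq M^{(\sigma(\mathbf{k}))}$ in $\C\langle X\rangle$ (the Magnus polynomials form a basis, so distinct indices give linearly independent elements), so $\pi((M^{(\mathbf{k})}-M^{(\sigma(\mathbf{k}))})x_1)$ is typically a nonzero element of $\C\langle Y\rangle$ lying in $\Ker\polylog^-_\bullet$. Expanding it by means of Corollary \ref{expantion of Magnus poly} together with the isomorphism $\pi$ then produces the $\Q$-linear functional equations among non-positive MPLs of the same weight and depth announced in the introduction; for $r=2$ this recovers the two expansions in Corollary \ref{non-positive polylog shuffle}.
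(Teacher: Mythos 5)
Your argument is correct and is essentially identical to the paper's proof: both apply Theorem \ref{Magnus poly and n-fold prod} to $\mathbf{k}$ and to $\sigma(\mathbf{k})$, invoke the commutativity of $\C[z,(1-z)^{-1}]$ to equate the two products, and conclude by linearity of $\pi$ and $\polylog^-_\bullet$. No gaps.
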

    \begin{proof}
        We can easily prove this equation from Theorem \ref{Magnus poly and n-fold prod} and commutativity of $\C[z,(1-z)^{-1}]$:
        \begin{equation}
            \polylog^-_{\pi(M^{(\mathbf{k})}x_1)}=\polylog^-_{k_1}\cdots\polylog^-_{k_r}=\polylog^-_{k_{\sigma(1)}}\cdots\polylog^-_{k_{\sigma(r)}}=\polylog^-_{\pi(M^{(\sigma(\mathbf{k}))}x_1)}.
        \end{equation}
    \end{proof}
    Theorem \ref{Magnus and symmetric group} provides us with a systematic way to produce $\Q$-linear functional equations among a family of non-positive MPLs with same weight and depth. We will see some explicit examples:
    \begin{exa}\label{Algorithm examples of Magnus}
        \begin{enumerate}[(1)]
            \item 
            $\polylog^-_{\pi(M^{(1;2)}x_1-M^{(2;1)}x_1)}=0$ holds. Then let us calculate each Magnus polynomial term:
            \begin{enumerate}[$\bullet$]
                \item 
                $M^{(1;2)}x_1=[x_0,x_1]x_0^2x_1=x_0x_1x_0^2x_1-x_1x_0^3x_1.$
                \item 
                $M^{(2;1)}x_1=[x_0,[x_0,x_1]]x_0x_1=x_0^2x_1x_0x_1-2x_0x_1x_0^2x_1+x_1x_0^3x_1.$
            \end{enumerate}
            So, $M^{(1;2)}x_1-M^{(2;1)}x_1=3x_0x_1x_0^2x_1-2x_1x_0^3x_1-x_0^2x_1x_0x_1$, and this means that the following equation holds:
            \begin{equation}
                3\polylog^-_{(1,2)}=2\polylog^-_{(0,3)}+\polylog^-_{(2,1)}.
            \end{equation}
            \item 
            $\polylog^-_{\pi((M^{(0,1;2)}-M^{(1,2;0)})x_1)}=0$ holds. Then:
            \begin{itemize}
                \item 
                $M^{(0,1;2)}x_1=x_1[x_0,x_1]x_0^2x_1$\\
                $=x_1x_0x_1x_0^2x_1-x_1^2x_0^3x_1$
                \item 
                $M^{(1,2;0)}x_1=[x_0,x_1][x_0,[x_0,x_1]]x_1$\\
                $=x_0x_1x_0^2x_1^2-2x_0x_1x_0x_1x_0x_1+x_0x_1^2x_0^2x_1-x_1x_0^3x_1^2+2x_1x_0^2x_1x_0x_1-x_1x_0x_1x_0^2x_1$
            \end{itemize}
            So, $(M^{(0,1;2)}-M^{(1,2;0)})x_1=2x_1x_0x_1x_0^2x_1-x_1^2x_0^3x_1-x_0x_1x_0^2x_1^2+2x_0x_1x_0x_1x_0x_1-x_0x_1^2x_0^2x_1+x_1x_0^3x_1^2-2x_1x_0^2x_1x_0x_1$,\\
            and this means that the following equation holds:
            \begin{equation}
                2\polylog^-_{(0,1,2)}+2\polylog^-_{(1,1,1)}+\polylog^-_{(0,3,0)}=\polylog^-_{(0,0,3)}+\polylog^-_{(1,2,0)}+\polylog^-_{(1,0,2)}+2\polylog^-_{(0,2,1)}.
            \end{equation}
        \end{enumerate}
    \end{exa}
    \medskip
    Recall that $x_1$ appears exactly $\depth(\mathbf{k})$ times in each term of $M^{(\mathbf{k})}$ (cf. Corollary \ref{expantion of Magnus poly}). Since Magnus polynomials form a basis of $R\langle X\rangle$, we have $R$-module grading $\mathbf{MB}^r:=\{M^{(\mathbf{k})}\in R\langle X\rangle\mid\depth(\mathbf{k})=r-1\}$ for $r>0$, and we can express
    \begin{equation}\label{Magnus grading}
        R\langle X\rangle=\bigoplus_{r>0}\langle\mathbf{MB}^r\rangle_R.
    \end{equation}
    For any $r>0$, we define
    \begin{equation}
        S_r:=\langle\{M^{(\mathbf{k})}-M^{(\sigma(\mathbf{k}))}\mid\mathbf{k}\in\N^\infty\times\N,\depth(\mathbf{k})=r-1,\sigma\in\mathfrak{S}_{r}\}\rangle_R\subset\langle\mathbf{MB}^r\rangle_R\subset R\langle X\rangle
    \end{equation}
    and $\mathcal{S}:=\bigoplus_{r>0}S_r$. Finally we shall discuss a refinement of Theorem \ref{main5}.
    \begin{prop}\label{Permutation is not equal to Ker}
        Let $R=\C$. Then the following relation naturally holds:
        \begin{equation}
            \langle\{\pi((M^{(\mathbf{k})}-M^{(\sigma(\mathbf{k}))})x_1)\mid\mathbf{k}\in\N^\infty\times\N,\sigma\in\mathfrak{S}_{\depth(\mathbf{k})}\}\rangle_\C\subsetneq\Ker\polylog^-_\bullet.
        \end{equation}
    \end{prop}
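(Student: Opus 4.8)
The plan is to establish the two halves of the statement separately: the inclusion $\subseteq$ is immediate from Theorem \ref{main5} (Theorem \ref{Magnus and symmetric group}), since every generator $\pi((M^{(\mathbf{k})}-M^{(\sigma(\mathbf{k}))})x_1)$ was shown there to lie in $\Ker\polylog^-_\bullet$, and the kernel is a $\C$-subspace. The real content is strictness, i.e.\ producing an explicit element of $\Ker\polylog^-_\bullet$ that is \emph{not} in the span $\mathcal{S}x_1$ of the permutation differences. My first step would be to fix a small weight $w$ and depth $r$ (depth $2$ or $3$ should already suffice) and to compute the two finite-dimensional spaces inside $\langle\mathbf{MB}^r\rangle_\C$ restricted to that bidegree: namely the image of $\polylog^-_\bullet\circ\pi$ and the subspace generated by the permutation differences. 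Because $\polylog^-_\bullet\circ\pi$ sends $M^{(\mathbf{k})}x_1$ to the product $\polylog^-_{k_1}\cdots\polylog^-_{k_\infty}$, the kernel in a fixed bidegree is computable: list all multi-indices $\mathbf{k}$ of that depth and weight, express each product via Corollary \ref{n-fold product} (or equivalently via \eqref{eq.main4.r=2}), and read off the linear relations among the resulting non-positive MPLs in $\C[z,(1-z)^{-1}]$.

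The key point to exploit is that the permutation differences only ever relate $M^{(\mathbf{k})}$ to $M^{(\sigma(\mathbf{k}))}$ for the \emph{same} multiset of entries $\{k_1,\ldots,k_{r-1},k_\infty\}$; hence $\mathcal{S}$ is graded not just by depth and weight but, more finely, by the multiset of parts of $\mathbf{k}$. So if I can find two multi-indices $\mathbf{k}$ and $\mathbf{k}'$ with the \emph{same} weight and depth but \emph{different} multisets of parts such that $\polylog^-_{k_1}\cdots\polylog^-_{k_\infty}=\polylog^-_{k'_1}\cdots\polylog^-_{k'_\infty}$ in $\C[z,(1-z)^{-1}]$, then $(M^{(\mathbf{k})}-M^{(\mathbf{k}')})x_1$ lies in $\Ker\polylog^-_\bullet$ but cannot be a linear combination of permutation differences, since projecting onto any multiset-component other than those of $\mathbf{k},\mathbf{k}'$ kills nothing and the $\mathbf{k}$-component of any permutation difference involving only $\mathbf{k}$'s multiset cannot produce the $\mathbf{k}'$-term. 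Concretely, I would search for a coincidence among products $\polylog^-_{a}\polylog^-_{b}$ with $a+b$ fixed; for instance one expects relations forced by the low-degree structure of $z/(1-z)$ and its $\theta_0$-derivatives, and a computer check over small weights should turn one up (e.g.\ comparing $\polylog^-_{y_0}\polylog^-_{y_3}$-type products against others of weight $3$, depth $2$). Once a single such pair is found, strictness follows.

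The main obstacle is the second step: verifying that the candidate element genuinely escapes $\mathcal{S}x_1$ rather than merely escaping the ``obvious'' permutation relations. This requires the multiset-grading argument above to be made airtight — one must check that $\mathcal{S}$ respects the decomposition of $\langle\mathbf{MB}^r\rangle_\C$ indexed by multisets of parts (which is clear, since each generator $M^{(\mathbf{k})}-M^{(\sigma(\mathbf{k}))}$ is homogeneous for it) and that $\pi(-\,x_1)$ is injective, so that a nonzero difference of Magnus polynomials with distinct multiset-supports stays nonzero and stays ``spread across'' at least two multiset-components, whereas every element of $\mathcal{S}x_1$ that is a sum of differences within single multisets can be recombined to have all its Magnus-coefficients summing to zero within each multiset-block. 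Exhibiting the numerical coincidence among the $\polylog^-$ products is routine once the search range is fixed; I would relegate the explicit computation to a short displayed calculation and reference section 6's worked examples for the flavour, noting that the finer analysis of $\Ker\polylog^-_\bullet$ is deferred to \cite{KKN}.
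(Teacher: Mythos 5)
Your treatment of the inclusion and your observation that $\mathcal{S}$ is graded by the multiset of parts of $\mathbf{k}$ are both fine, but the crux of the proposition is the strictness, and there your proposal has a genuine gap: the counterexample you propose to exhibit does not exist in the range where you propose to look for it. You want two indices of the \emph{same} depth and weight but different multisets with $\polylog^-_{k_1}\cdots\polylog^-_{k_\infty}=\polylog^-_{k'_1}\cdots\polylog^-_{k'_\infty}$, and you suggest finding one by a search among depth-$2$ products of small weight. But $\polylog^-_n(z)=\sum_{m>0}m^nz^m\sim n!/(1-z)^{n+1}$ as $z\to1$, so an equality $\polylog^-_a\polylog^-_b=\polylog^-_c\polylog^-_d$ with $a+b=c+d=N$ forces $a!\,b!=c!\,d!$, i.e.\ $\binom{N}{a}=\binom{N}{c}$, which by strict unimodality of binomial coefficients forces $\{a,b\}=\{c,d\}$. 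So no such coincidence exists in depth $2$ at any weight, and the ``routine computer check'' you defer to would return nothing. (Multi-term linear relations among same-depth products do exist in higher depth for dimension reasons — e.g.\ at depth $3$, weight $8$ there are ten multisets but the products live in a $9$-dimensional subspace of $\lambda^3\C[\lambda]$ — but you neither identify such a relation nor argue for its existence, and a two-term equality is a much stronger demand than a linear dependence.)

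The paper takes a different and simpler route: the kernel elements it uses mix \emph{different depths}, not different multisets of the same depth. Citing Duchamp et al.'s identity
\begin{equation*}
\polylog^-_{y_5}\polylog^-_{y_4}=-\tfrac{1}{60}\polylog^-_{y_2}+\tfrac{1}{63}\polylog^-_{y_4}+\tfrac{1}{1260}\polylog^-_{y_{10}},
\end{equation*}
one gets $\pi\bigl(\bigl(M^{(5;4)}+\tfrac{1}{60}M^{(;2)}-\tfrac{1}{63}M^{(;4)}-\tfrac{1}{1260}M^{(;10)}\bigr)x_1\bigr)\in\Ker\polylog^-_\bullet$; since $\mathcal{S}=\bigoplus_r S_r$ is graded by depth and the depth-one component of $\mathcal{S}$ is zero (permutations of a single index are trivial), the nonzero depth-one part of this element shows it cannot lie in $\pi(\mathcal{S}x_1)$. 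This is exactly the coarser version of your multiset-grading argument, applied to a relation that provably exists. To repair your proof, replace the sought-for same-depth coincidence by a cross-depth relation of this kind (either quoted from \cite{D17a} or derived directly, e.g.\ by expanding $\polylog^-_a\polylog^-_b$ as a polynomial in $\lambda=z/(1-z)$ and re-expressing it in the basis $\{\polylog^-_n\}_{n\ge0}$ of $\lambda\C[\lambda]$), and then run your grading argument with respect to depth rather than multisets.
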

    Before giving the proof, let us note the following equation:
    \begin{equation}
        \pi(\mathcal{S}x_1)=\langle\{\pi((M^{(\mathbf{k})}-M^{(\sigma(\mathbf{k}))})x_1)\mid\mathbf{k}\in\N^\infty\times\N,\sigma\in\mathfrak{S}_{\depth(\mathbf{k})}\}\rangle_\C.
    \end{equation}
    Thus, the above proposition is equivalent to saying $\pi(\mathcal{S}x_1)\subsetneq\Ker\polylog^-_\bullet$.
    \begin{proof}[Proof of Proposition \ref{Permutation is not equal to Ker}]
        Inclusion is clear by definition and Theorem \ref{Magnus and symmetric group}. We shall show the non-equality by using the computational result due to Duchamp et al. in \cite[Theorem 5 \& Example 7, p.181-182]{D17a}:
        \begin{equation}\label{counter example-1}
            \polylog^-_{y_5}\polylog^-_{y_4}=\displaystyle-\frac{1}{60}\polylog_{y_2}^-+\frac{1}{63}\polylog^-_{y_4}+\frac{1}{1260}\polylog_{y_{10}}^-.
        \end{equation}
        (See Example \ref{non-trivial element in kernel} for some examples including this equation.)
        The left hand side of (\ref{counter example-1}) is $\polylog_{\pi(M^{(5;4)}x_1)}^-$ by Theorem \ref{Magnus poly and n-fold prod}. Now, $y_2,y_4$ and $y_{10}$ in the right hand side correspond to $M^{(;2)},M^{(;4)}$ and $M^{(;10)}$ respectively. So we have
        \begin{equation}
            \pi\left(\left(M^{(5;4)}+\frac{1}{60}M^{(;2)}-\frac{1}{63}M^{(;4)}-\frac{1}{1260}M^{(;10)}\right)x_1\right)\in\Ker\polylog_\bullet^-.
        \end{equation}
        Therefore, by Remark \ref{Magnus grading}, all of the components spanned by $M^{(;2)}, M^{(;4)}$ and $M^{(;10)}$ are contained in $\langle\mathbf{MB}^1\rangle$, whereas $M^{(5;4)}\in\mathbf{MB}^2$. Then, we will prove $\pi(\mathcal{S}x_1)\neq\Ker\polylog^-_\bullet$ by contradiction. Here, we assume that $\pi(\mathcal{S}x_1)=\Ker\polylog_\bullet^-$. Then the following must hold:
        \begin{equation}
            M^{(5;4)}\in S_2,\quad\frac{1}{60}M^{(;2)}-\frac{1}{63}M^{(;4)}-\frac{1}{1260}M^{(;10)}\in S_1.
        \end{equation}
        Now, we see $S_0=0$ and the fact that the Magnus polynomials $M^{(;2)}, M^{(;4)}$ and $M^{(;10)}$ are linearly independent over $R$. However, if one of them holds, then the other does not. Hence this is contradiction.
    \end{proof}
    \begin{exa}\label{non-trivial element in kernel}
        In addition to the above (\ref{counter example-1}) Duchamp et al. (\cite[Example 7]{D17a}) computed the following remarkable results:
        \begin{align}
            \polylog^-_{y_6}\polylog^-_{y_7}&=-\frac{691}{5460}\polylog_{y_2}^-+\frac{5}{44}\polylog_{y_4}^--\frac{1}{40}\polylog^-_{y_6}+\frac{1}{24024}\polylog_{y_{14}}^-,\\
            \polylog^-_{y_8}\polylog^-_{y_{10}}&=\frac{43867}{798}\polylog_{y_1}^--\frac{39787}{510}\polylog^-_{y_3}+\frac{77}{3}\polylog_{y_5}^--\frac{11056}{4095}\polylog_{y_7}^-+\frac{5}{66}\polylog_{y_9}^-+\frac{1}{831402}\polylog^-_{y_{19}}.
        \end{align}
    \end{exa}
    The above proposition shows that the difference $M^{(\mathbf{k})}-M^{(\sigma(\mathbf{k}))}$ of permuted Magnus polynomials do not supply all functional equations of non-positive MPLs. In our subsequent paper \cite{KKN}, we shall discuss more about evaluation of the size of $\Ker(\polylog^-_\bullet\circ\pi)$.


\begin{thebibliography}{[EMSS]}
        \bibitem[Bu23]{Bu23}
        V. C. Bui, V. Hoang Ngoc Minh, V. Nguyen Dinh, Q. H. Ngo, \textit{On The Global Renormalization and Regularization of Several Complex Variable Zeta Functions by Computer}, 2023, \url{https://arxiv.org/abs/2209.09849}.
        \bibitem[CM09]{CM09}
        C. Costermans and V. Hoang Ngoc Minh, \textit{non-commutative algebra, multiple harmonic sums and applications in discrete probability}, J. Symbolic Comput. {\bf 44} (2009), no. 7, 801--817.
        \bibitem[Du17a]{D17a}
        G. H. E. Duchamp, V. Hoang Ngoc Minh and Q. H. Ngo, \textit{Harmonic sums and polylogarithms at non-positive multi-indices}, J. Symbolic Comput. {\bf 83} (2017), 166--186.
        \bibitem[Du17b]{D17b}
        G. H. E. Duchamp et al., Mathematical renormalization in quantum electrodynamics via non-commutative generating series, in {\it Applications of computer algebra}, 59--100, Springer Proc. Math. Stat., 198, Springer, Cham, 2017.
        \bibitem[EMS17]{EMS17}
        K. Ebrahimi-Fard, D. Manchon and J. Singer, \textit{The Hopf algebra of ($q$-)multiple polylogarithms with non-positive arguments}, Int. Math. Res. Not. IMRN {\bf 2017}, no. 16, 4882--4922.
        \bibitem[Fu04]{F04}
        H. Furusho, \textit{p-adic multiple zeta values I. p-adic multiple polylogarithms and the p-adic KZ equation}, Inventiones mathematicae \textbf{155.2} (2004): 253--286.
        \bibitem[Go98]{Go98}
        A. Goncharov, \textit{Multiple polylogarithms, cyclotomy and modular complexes}, Math. Res. Lett. \textbf{5}(1998), 497--516.
        \bibitem[GZ17]{GZ17}
        L. Guo and B. Zhang, \textit{Polylogarithms and multiple zeta values from free Rota-Baxter algebras}, Sci. China Math. {\bf 53} (2010), no. 9, 2239--2258.
        \bibitem[IKZ06]{IKZ06}
        K. Ihara, M. Kaneko, D. Zagier, \textit{Derivation and double shuffle relations for multiple zeta values}, Compositio Mathematica, 2006;\textbf{142(2)}:307--338. 
        \bibitem[Ki25]{K25}
        K. Kitamura, \textit{A Combinatorial Study of Multiple Polylogarithms with Non-Positive Multi-Indices (in Japanese)}, master thesis, Osaka University, February 2025.
        \bibitem[KKN]{KKN}
        K. Kitamura, N. Komiyama, H. Nakamura, \textit{in preparation}.
        \bibitem[Ma37]{Ma37}
        W. Magnus, \textit{\"Uber Beziehungen zwitschen h\"oheren Kommutatoren}, J. Reine Angew. Math. \textbf{177}(1937)105--115.
        \bibitem[Na23]{Na23}
        H. Nakamura, \textit{Demi-shuffle duals of Magnus polynomials in a free associative algebra}, Algebr. Comb. {\bf 6} (2023), no. 4, 929--939.
        \bibitem[Wa02]{Wa02}
        M. Waldschmidt, \textit{Multiple Polylogarithms: An Introduction}. In: Agarwal, A.K., Berndt, B.C., Krattenthaler, C.F., Mullen, G.L., Ramachandra, K., Waldschmidt, M. (eds) Number Theory and Discrete Mathematics. Hindustan Book Agency, Gurgaon, 2002. \url{https://doi.org/10.1007/978-93-86279-10-1_1}
    \end{thebibliography}
\end{document}